\newtheorem{theorem}{Theorem}
\theoremstyle{definition}
\newtheorem{example}{Example}
\def\ps@pprintTitle{%
 \let\@oddhead\@empty
 \let\@evenhead\@empty
 \def\@oddfoot{\centerline{\thepage}}%
 \let\@evenfoot\@oddfoot}
\begin{document}

\begin{frontmatter} 

\title{Global stability of a  distributed delayed viral model with general incidence rate}
\author{Eric \'Avila-Vales \footnote[1]{Corresponding author: avila@correo.uady.mx} , Abraham Canul-Pech and Erika Rivero-Esquivel \\
\small{\textit{Universidad Aut\'onoma de Yucat\'an, Facultad de Matem\'aticas}} \\
\small{\textit{Anillo Perif\'erico Norte, Tablaje 13615, C.P. 97119.
M\'erida, Yucat\'an, M\'exico.}}
}

\begin{abstract}
In this paper, we discussed an infinitely distributed delayed viral infection model with nonlinear immune response and general incidence rate. We proved the existence and uniqueness of the equilibria. By using the Lyapunov functional and LaSalle invariance principle, we obtained the conditions of global stabilities of
the infection-free equilibrium, the immune-exhausted equilibrium and the endemic equilibrium. Numerical simulations are given to verify the analytical results.
\end{abstract}

\begin{keyword}
CTL-response \sep Lyapunov functional\sep basic reproduction number \sep viral reproduction number 
\MSC{Primary: 34K20, secondary: 34D23}
\end{keyword}

\end{frontmatter}

\section{Introduction}

During recent decades there has been a lot of research regarding mathematical modelling of viruses dynamics via models of ordinary differential equations (ODE). Advances in immunology have lead us to better understand the interactions between populations of virus and the immune system, therefore several nonlinear sytems of ordinary differential equations (ODE's) has been proposed. Nowak and Bengham \cite{nowak1996population} study the model

\begin{align}
x'(t)&=s-dx(t)-\beta x(t) v(t),\\
y'(t)&= \beta x(t) v(t)-ay(t)-py(t)z(t),\\
v'(t)&= ky(t)-u v(t),\\
z'(t)&= cy(t)z(t)-bz(t).
\end{align}

Where $x(t)$ denotes the number of healthy cells, $y(t)$ denotes the infected cells, $v(t)$ denotes the number of matures viruses and $z(t)$  denotes the number of CTL (cytotoxic T lymphocyte response) cells. Uninfected target
cells are assumed to be generated at a constant rate
$s$ and die at rate $d$. Infection of target cells by free
virus is assumed to occur at rate $\beta$. Infected cells die at
rate $a$ and are removed at rate $p$ by the CTL immune
response. New virus is produced from infected cells at
rate $k$ and dies at rate $u$. The average lifetime of uninfected
cells, infected cells and free virus is thus given by
$1/d$, $1/a$ and $1/u$, respectively. $c$ denotes rate at which
the CTL response is produced, and $b$ denotes death rate
of the CTL response, respectively, with all given constants positive.

Generally, the type of incidence function used in a model has an important role in modeling the dynamics of viruses. The most common, is the bilinear incidence rate $\beta x v $. However, this rate is not useful all the time. For instance, using bilinear incidence suggests that model can not describe the infection process of hepatitis B, where individuals with small liver are more resistant to infection than the ones with a bigger liver. Recently, works about models of infections by viruses have used the incidence function of type Beddington-DeAngelis and Crowley-Martin. In \cite{xu2012global} the authors propose a model of infection by virus with Crowley-Martin functional response $\frac{\beta x(t)v(t)}{(1+ax(t))(1+bv(t))}$. Li and Fu in \cite{li2015global} study the following system:

\begin{align}
x'(t)&=s-dx(t)-\frac{\beta x(t)v(t)}{(1+ax(t))(1+b v(t))},\\
y'(t)&= \frac{\beta x(t-\tau)v(t)e^{-s\tau}}{(1+ax(t-\tau))(1+b v(t-\tau))}-ay(t)-py(t)z(t),\\
v'(t)&= ky(t)-u v(t),\\
z'(t)&= cy(t)z(t)-b z(t).
\end{align}

They construct a Lyapunov functional to establish the global dynamics of the system.\par 
More recently, in \cite{yang2015stability}, the authors consider the system

\begin{align}
x'(t)&=s-dx(t)-\frac{\beta x(t)v(t)}{1+ax(t)+bv(t)},\\
y'(t)&=\frac{\beta x(t)v(t)}{1+ax(t)+bv(t)}-ay(t)-py(t)z(t),\\
v'(t)&= ky(t)-u v(t),\\
z'(t)&= cy(t-\tau) z(t-\tau)-b z(t).
\end{align}

They give also results of global stability as well as Hopf bifurcation results.

Yang and Wei in \cite{yang2015analyzing} consider a more general incidence rate, they study the system

\begin{align}
x'(t)&=s-dx(t)-x(t) f(v(t)),\\
y'(t)&= x(t) f(v(t))-ay(t)-py(t)z(t),\\
v'(t)&= ky(t)-u v(t),\\
z'(t)&= cy(t)z(t)-b z(t),
\end{align}

giving some results about global stability in terms of the basic reproduction number and the immune response reproduction number. Here the $f(v)$ is assumed to be a continuous function on $v$ that belongs to $(0, \infty)$ and satisfies $f(0)= 0, f'(v)>0$ for all $v$ greater or equal to 0 and $f''(v)<0$ for all $v$ greater or equal to $0$. \par 
In \cite{hattaf2012mathematical} the authors consider a more general incidence rate $f(x,y,v)v$, where $f$ is assumed to be continuously differentiable in the interior of $\mathbb{R}^{3}_{+}$ and satisfies the following hypotheses
\begin{itemize}
\item[i)]$f(0,y,v)=0$, for all  $y\geq 0$,  $v\geq 0$.
\item[ii)]$\frac{\partial f}{\partial x}(x,y,v) >0$ for all $x>0$, $y> 0$, $v> 0$.
\item[iii)]$\frac{\partial f}{\partial y}(x,y,v)\leq 0$  and $\frac{\partial f}{\partial v}(x,y,v)\leq 0$ for all $x\geq 0$, $y\geq 0$, $v\geq 0$.
\end{itemize}

Finally, in \cite{hattaf2012global} analyse an infection model by virus , with general incidence and immune response, which generalizes the systems by  \cite{xu2012global, hattaf2012mathematical, wang2011global}:

\begin{align*}
\dot{x} &= s-dx-f(x,y,v)v, \\
\dot{y} &= f(x,y,v)v-ay-pyz,\\
\dot{v} &= ky-uv, \\
\dot{z} &= cyz-bz,
\end{align*}

where $f(x,y,v)$ is a continuous and differentiable function in the interior of $ \mathbb{R}_+^{3} $ and satisfies the conditions [i)], [ii) [iii)]. \par 
Another viral infection model is the studied in \cite{shu2013global} given by:

\begin{align*}
\dot{x} &= n(x)-h(x,v), \\
\dot{y} &= \int_0^{\infty} f_1( \tau )h(x(t- \tau),v(t- \tau)) d \tau-ag_1(y)-pw(y,z),\\
\dot{v} &= k \int_0^{\infty} f_2(\tau) g_1(y(t- \tau)) d \tau, \\
\dot{z} &= c \int_0^{\infty} f_3(\tau) w(y(t-\tau),z(t-\tau)) d\tau -bg_3(z).
\end{align*}

Where $x,y,v,z$ denotes the non infected cells, infected cells, virus and specific virus CTL at time $t$, respectively. Conditions on functions $f_i$,$h,w,$ and $g_i$ are specified in \cite{shu2013global}. A related work is \cite{ji2016global}. \par

Based on the discussion above, we will study a delayed viral infection model with general incidence rate and CTL immune response given by

\begin{align} \label{viral11} 
\dot x&= n(x)-f(x(t),y(t),v(t))v(t),\\
\dot y&= \int^{\infty}_{0}f_{1}(\tau)f(x(t-\tau),y(t-\tau),v(t-\tau))v(t-\tau)e^{-\alpha_1 \tau}d\tau -a\varphi_{1}(y(t))-pw(y(t),z(t)), \\
\dot v&= k\int^{\infty}_{0}f_{2}(\tau)e^{-\alpha_{2}\tau}\varphi_{1}(y(t-\tau))d \tau-uv(t),\\
\dot z&= c\int^{\infty}_{0}f_{3}(\tau)w(y(t-\tau),z(t-\tau))d \tau -b\varphi_{2}(z(t)).
 \end{align}

The dynamics of uninfected cells, $x$, in absence of infection is governed by
$x'=n(x),$
where $n(x)$ is the intrinsic growth rate of uninfected cells accounting for both productions
and natural mortality, which is assumed to satisfy the following:
\begin{itemize}
\item[$H_{1})$]$n(x)$ is continuously differentiable, and exist $\bar{x}>0$ such as that $n(\bar{x})=0$,
$n(x)>0$ for $x\in [0,\bar{x})$, and $n(x)<0$ for $x<\bar{x}$. Typical functions appearing in the literature are $n(x)=s-dx$ and $n(x)=s-dx+rx\left(1-{x}/{x_{max}}\right)$.
\end{itemize}
\begin{itemize}
\item[$H_{2})$] $\varphi_{i}$ is strictly increasing on $[0,\infty)$; $\varphi_{i}(0)=0$; $\varphi_{i}'(0)=1$;$\lim_{y\longrightarrow \infty}\varphi_{i}(y)=+\infty$; and exist $k_{i}>0$ such as $\varphi_{i}(y)\geq k_{i}y$ for any $y_{i}\geq 0$.
\end{itemize}

\begin{itemize}
\item[$H_{3})$] $w(y,z)$ is continuously differentiable; $\frac{\partial w(y,z)}{\partial z}>0$ for $y\in (0, \infty)$, $z\in [0, \infty)$; $w(y,z)>0$ for $y \in (0,\infty)$ and $z \in (0,\infty)$ with $w(y,z)=0$ if and if only $y=0$ or $z=0$.
\end{itemize}
All parameters are nonnegative and the distributions $f_i$ for $i=1,2$ are assumed to satisfy the following (see \cite{shu2013global} and \cite{wang2016threshold} ) :
\begin{itemize}
\item $ f_i(\tau) \geq 0, \text{for} \hspace{0.1in} \tau \geq 0$.
\item $ \int_0^{\infty}f_i(\tau)d\tau=1$ for $i=1,2$.
\item $\int_0^{\infty} f_3(\tau)d\tau \leq 1$ and $\int_0^{\infty} f_3(\tau) e^{s \tau} d\tau< \infty$ for some $s>0$. 
\end{itemize}

However, the uniqueness and Global stability results on the positive equilibrium require the following assumption.

\begin{itemize}
\item[$H_{4})$] $w(y,z)=\varphi_{1}(y)\varphi_{2}(z)$. 
\end{itemize}
Motivated by the work mentioned above we consider the following model 

\begin{align} \label{viral1} 
\dot x&= n(x)-f(x(t),y(t),v(t))v(t),\\
\dot y&= \int^{\infty}_{0}f_{1}(\tau)f(x(t-\tau),y(t-\tau),v(t-\tau))v(t-\tau)e^{-\alpha_1 \tau} d \tau -a\varphi_{1}(y(t))-p\varphi_{1}(y(t))\varphi_{2}(z(t)), \\
\dot v&= k\int^{\infty}_{0}f_{2}(\tau)e^{-\alpha_{2}\tau}\varphi_{1}(y(t-\tau))d \tau-uv(t),\\
\dot z&= c\int^{\infty}_{0}f_{3}(\tau)\varphi_{1}(y(t-\tau))\varphi_{2}(z(t-\tau)) d\tau-b\varphi_{2}(z(t)).
\end{align} 

In this paper, we will study the global dynamics model of (\ref{viral1}), organization is as  follows: in section 2, we prove the existence and uniqueness of the infection free equilibrium, the CTL-inactivated infection equilibrium and the CTL-activated infection equilibrium. In section 3, the conditions that allow the  global stability of each equilibrium are determined and proved. Section 4 provide several numerical simulations that shows the results obtained in section 3 and 4. Finally, in section 5 we summarize the results obtained, comparing them with the previous models studied in literature, and setting the guidelines for possible future work.

\subsection{Positivity and Boundedness}
For system \eqref{viral1} the suitable space is $\mathcal{C}^{4}= \mathcal{C} \times \mathcal{C} \times \mathcal{C} \times \mathcal{C} $, where $ \mathcal{C} $ is the Banach space of fading memory type (\cite{fv1988determining}):

\begin{equation*}
C:= \lbrace  \phi \in C((-\infty,0], \mathbb{R}), \hspace{0.1 in}  \phi(\theta)e^{\alpha \theta} \hspace{0.1 in} \text{is uniformly continous for} \hspace{0.1 in}  \theta \in (- \infty,0] \hspace{0.1 in}  \text{and} \hspace{0.1 in}  \Vert \phi\Vert< \infty \rbrace,
\end{equation*}

where $\alpha>0$ is a constant and the norm of a $ \phi \in \mathcal{C}$ is defined as $ \Vert \phi \Vert = \sup_{\theta \leq 0} |\phi(\theta)|e^{\alpha \theta}.$ The nonnegative cone of $\mathcal{C}$ is defined as $\mathcal{C}_+ = C((-\infty,0], \mathbb{R}_+)$.

\begin{theorem}\label{PosB}
Under the initial conditions, all solutions of system (\ref{viral1}) are positive and ultimately uniformly bounded in $X$.
\end{theorem}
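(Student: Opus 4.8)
The plan is to prove forward invariance of the nonnegative cone first, and then to extract successive a priori bounds for $x$, $y$, $v$ and, last, $z$, each bound feeding into the next. The only delicate step is the last one, because in \eqref{viral1} the immune variable $z$ occurs inside its own distributed‑delay integral, so its loss term $-b\varphi_{2}(z)$ cannot by itself absorb the production unless one exploits the $y$–$z$ coupling.

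\emph{Positivity.} I would show that $\mathcal{C}_{+}^{4}$ is forward invariant by inspecting the vector field on its boundary. If $x$ is the first coordinate to reach $0$, say at $t_{1}>0$, then $\dot x(t_{1})=n(0)-f(0,y(t_{1}),v(t_{1}))v(t_{1})=n(0)>0$, using $f(0,\cdot,\cdot)=0$ and $n(0)>0$ from $H_{1}$, so $x$ cannot cross $0$. For $y$, $v$, $z$ each loss term vanishes at the corresponding coordinate zero since $\varphi_{i}(0)=0$, while each gain term is a nonnegative integral ($f\ge 0$, $\varphi_{1}\ge 0$ on $\mathbb{R}_{+}$, $f_{i}\ge 0$); writing $\varphi_{i}(w)=w\,\psi_{i}(w)$ with $\psi_{i}$ continuous and $\psi_{i}(0)=\varphi_{i}'(0)=1$ ($H_{2}$), each of $y$, $v$, $z$ satisfies a differential inequality $\dot w\ge -c(t)\,w$ with $c$ locally bounded along the solution, so $w$ remains nonnegative and is strictly positive for $t>0$ once its gain term switches on; for $v$ this is transparent from the variation‑of‑constants formula.

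\emph{Bounds for $x$ and $y$.} Since $fv\ge 0$ and, by $H_{1}$, $n<0$ on $(\bar x,\infty)$, the inequality $\dot x\le n(x)$ forces $x$ to be bounded on $[0,\infty)$ with $\limsup_{t\to\infty}x(t)\le\bar x$; in particular $n(x(t))\le\bar n:=\max_{[0,\bar x]}n$ for every $t$. Next put $G(t)=\int_{0}^{\infty}f_{1}(\tau)e^{-\alpha_{1}\tau}x(t-\tau)\,d\tau+y(t)$; the two copies of the incidence integral cancel and, using $\int_{0}^{\infty}f_{1}=1$, $H_{2}$ (so $\varphi_{1}(y)\ge k_{1}y$) and $H_{3}$,
\[
\dot G=\int_{0}^{\infty}f_{1}(\tau)e^{-\alpha_{1}\tau}n(x(t-\tau))\,d\tau-a\varphi_{1}(y)-p\varphi_{1}(y)\varphi_{2}(z)\ \le\ \bar n-ak_{1}y .
\]
Because the delayed $x$‑integral is bounded, $y\ge G-\mathrm{const}$, whence $\dot G\le\mathrm{const}-ak_{1}G$, so $G$, and therefore $y$, is uniformly ultimately bounded.

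\emph{Bounds for $v$ and $z$.} With $y$ bounded, $\varphi_{1}(y)$ is bounded (monotonicity), so $\dot v\le k\,\varphi_{1}(\sup y)-uv$ gives a bound for $v$; then $(x,y,v)$ stays in a compact set and, $f$ being continuous, the incidence integral $I(t)$ is bounded by a constant $K_{1}$. For $z$ I would \emph{not} bound $\varphi_{1}(y(t-\tau))$ crudely (that only closes the estimate when $c\,\varphi_{1}(\sup y)<b$), but instead substitute, from the $y$‑equation, $p\,\varphi_{1}(y(t-\tau))\varphi_{2}(z(t-\tau))=I(t-\tau)-a\varphi_{1}(y(t-\tau))-\dot y(t-\tau)$ into $\tfrac{p}{c}\dot z$. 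The $\dot y$‑part telescopes: with $Y_{3}(t):=\int_{0}^{\infty}f_{3}(\tau)y(t-\tau)\,d\tau$, bounded because $y$ is and $\int_{0}^{\infty}f_{3}\le 1$,
\[
\frac{d}{dt}\!\Big(\tfrac{p}{c}\,z(t)+Y_{3}(t)\Big)=\int_{0}^{\infty}f_{3}(\tau)\big[I(t-\tau)-a\varphi_{1}(y(t-\tau))\big]\,d\tau-\tfrac{pb}{c}\varphi_{2}(z(t))\ \le\ K_{1}-\tfrac{pbk_{2}}{c}z(t).
\]
Since $z=\tfrac{c}{p}\big(\tfrac{p}{c}z+Y_{3}\big)-\tfrac{c}{p}Y_{3}\ge \tfrac{c}{p}\big(\tfrac{p}{c}z+Y_{3}\big)-\tfrac{c}{p}\sup Y_{3}$, the right‑hand side is dominated by $\mathrm{const}-bk_{2}\big(\tfrac{p}{c}z+Y_{3}\big)$, so $\tfrac{p}{c}z+Y_{3}$, and therefore $z$, is uniformly ultimately bounded. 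Collecting the four bounds, all independent of the initial data, shows every solution eventually enters and remains in a fixed bounded set, which is the assertion. The main obstacle is precisely this last step: it forces one to use the $y$‑equation to rewrite the immune production and to justify differentiating under the $f_{3}$‑integral, for which the regularity of the solution, the boundedness of $\dot y$, and the integrability assumptions on $f_{3}$ suffice.
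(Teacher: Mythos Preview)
Your argument for positivity and for the bounds on $x$, $y$, $v$ is essentially the paper's: the same contradiction/boundary argument for nonnegativity, then $\limsup x\le\bar x$ from $H_{1}$, then the auxiliary function $\int_{0}^{\infty}f_{1}(\tau)e^{-\alpha_{1}\tau}x(t-\tau)\,d\tau+y(t)$ for $y$ (this is exactly the paper's $e(t)+y(t)$), and finally the linear inequality for $v$.

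Where you and the paper diverge is the $z$-bound. The paper forms $L(t)=\tfrac{cG_{3}}{p}y(t)+z(t)$ with the \emph{current} $y$ and then, in computing $L'$, silently replaces both the incidence integral and the distributed term $c\!\int_{0}^{\infty}f_{3}(\tau)\varphi_{1}(y(t-\tau))\varphi_{2}(z(t-\tau))\,d\tau$ by their instantaneous values $G_{1}f(x,y,v)v$ and $cG_{3}\varphi_{1}(y)\varphi_{2}(z)$, so that the two $cG_{3}\varphi_{1}(y)\varphi_{2}(z)$ terms cancel. That cancellation is not valid when $f_{3}\neq\delta$; the paper's $z$-estimate therefore has a gap. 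Your choice of the comparison function $\tfrac{p}{c}z(t)+Y_{3}(t)$ with $Y_{3}(t)=\int_{0}^{\infty}f_{3}(\tau)\,y(t-\tau)\,d\tau$ is precisely the right repair: substituting the $y$-equation at the \emph{delayed} time makes the coupling term genuinely telescope into $\dot Y_{3}$, and then the linear dissipation $-\tfrac{pb}{c}\varphi_{2}(z)\le-\tfrac{pbk_{2}}{c}z$ closes the estimate.

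One point to tighten: the substitution $p\varphi_{1}(y(t-\tau))\varphi_{2}(z(t-\tau))=I(t-\tau)-a\varphi_{1}(y(t-\tau))-\dot y(t-\tau)$ uses the $y$-equation at time $t-\tau$, which is only available for $\tau\le t$ (for $\tau>t$ you are in the prescribed history, where $\dot y$ need not exist and the equation does not hold). Since you are after ultimate boundedness, it suffices to carry out the argument for $t$ large and control the tail $\int_{t}^{\infty}f_{3}(\tau)\,(\cdots)\,d\tau$ separately using the exponential moment condition on $f_{3}$ and the fading-memory bound on the initial data; you already flag the differentiation-under-the-integral issue, and this is where that care is actually needed.
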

\begin{proof}
To see that $x(t)$ is positive, we proceed by contradiction. Let $t_{1}$ the first value
of time such that $x(t_{1})=0$. From the first equation of (\ref{viral1}) we see that
$x'(t_{1})=n(0)>0$ and $x(t_{1})=0$, therefore there exists $\epsilon>0$ such
that $x(t)<0$ for $t \in (t_{1}-\epsilon,t_{1})$, this leads to a contradiction.
It follows that $x(t)$ is always positive. With a similar argument
we see that $y(t)$, $v(t)$ and  $z(t)$ are positive for $t\geq 0$.
\newline\newline
The hypotheses ($H_{1}$) and first equation of  (\ref{viral1}) imply that $\limsup_{t\longrightarrow \infty}x(t)\leq \bar{x}$.
\newline
From two first equations of \eqref{viral1} and assumption ($H_{2}$), we obtain

\begin{equation}
\int^{\infty}_{0}f_{1}(\tau)e^{-\alpha_1 \tau}x'(t-\tau)d\tau +y'(t)=\int^{\infty}_{0}f_{1}(\tau)e^{-\alpha_1 \tau}n(x(t-\tau))d\tau-a\varphi_{1}(y)\leq M_{1}G_{1}-ak_{1}y,
\end{equation}

where $M_{1}=\sup_{x \in [0,\bar{x}]}n(x)$ and $G_{1}=\int^{\infty}_{0}f_{1}(\tau)e^{-\alpha_1 \tau}d\tau $.
\newline\newline
Let $e(t)=\int^{\infty}_{0}f_{1}(\tau)e^{-\alpha_1 \tau}x(t-\tau)d\tau$. Then for $e(t)\leq \bar{x}G_{1}$ for $t>0$. Then

\begin{align*}
(e(t)+y(t))' &\leq M_{1}G_{1}-ak_{1}y\\
&= M_{1}G_{1}+M_{1}G_{1}-M_{1}G_{1}-ak_{1}y\\
&\leq 2M_{1}G_{1}-\frac{M_{1}}{\bar{x}}e(t)-ak_{1}y\\
&\leq 2M_{1}G_{1}-\bar{\mu}(e(t)+y(t))\textnormal{ where }\bar{\mu}=\min\left\lbrace \frac{M_{1}}{\bar{x}},ak_{1}\right\rbrace ,
\end{align*}

and thus $\limsup_{t\longrightarrow \infty}(e(t)+y(t))\leq \frac{2M_{1}G_{1}}{\bar{\mu}}$. Since $e(t)\geq 0$, we know that $\limsup_{t\longrightarrow \infty}y(t) \leq \frac{2M_{1}G_{1}}{\bar{\mu}}$. From third equation of (\ref{viral1}):

\begin{align*}
\dot v&= k\int^{\infty}_{0}f_{2}(\tau)e^{-\alpha_{2}\tau}\varphi_{1}(y(t-\tau)) d\tau  -uv(t)\\
&\leq kM_{2}G_{2}-uv(t),
\end{align*}

where $M_{2}=\sup_{y \in \left[ 0,\frac{2M_{1}G_{1}}{\bar{\mu}}\right] }\varphi_{1}(y)$ and $G_{2}=\int^{\infty}_{0}f_{2}(\tau)e^{-\alpha_2\tau}d\tau$,
and thus $\limsup_{t\longrightarrow \infty}v(t)\leq \frac{k M_{2}G_{2}}{u}$.
\newline\newline
Using an argument similar, let $G_3=\int^{\infty}_{0}f_{3}(\tau)d \tau  $, $L(t)=\frac{cG_{3}}{p}y(t)+z(t)$, $\tilde{\mu}=\min\lbrace ak_{1},bk_{2}\rbrace$\newline and $M_{3}=\sup_{(x,y,v) \in [0,\bar{x}]\times [0, \frac{2M_{1}G_{1}}{\bar{\mu}}]\times[0,\frac{k M_{2}G_{2}}{u}] }f(x,y,v)v$, then

\begin{align*}
L'(t)&=\frac{cG_{3}}{p}G_{1}f(x,y,v)v-\frac{acG_{3}}{p}\varphi_{1}(y)-cG_{3}\varphi_{1}(y)\varphi_{2}(z)+cG_{3}\varphi_{1}(y)\varphi_{2}(z)-b\varphi_{2}(z)\\
&\leq \frac{cG_{3}G_{1}}{p}M_{3}-\frac{acG_{3}}{p}k_{1}y-bk_{2}z\\
&\leq \frac{cG_{3}G_{1}}{p}M_{3}-\tilde{\mu}\left(\frac{cG_{3}}{p}y+z \right)\\
&=\frac{cG_{3}G_{1}}{p}M_{3}-\tilde{\mu}L(t),
\end{align*}

therefore $\limsup_{t\longrightarrow \infty}L(t)\leq \frac{cG_{1}G_{3}M_{3}}{p\tilde{\mu}}$. Since $\frac{cG_{3}}{p}y(t)\geq 0$, we know that $\limsup_{t\longrightarrow \infty}z(t) \leq \frac{cG_{1}G_{3}M_{3}}{p\tilde{\mu}}$.
\newline
Therefore, $x(t)$, $y(t)$, $v(t)$ and $z(t)$ are ultimately uniformly bounded in $\mathcal{C}\times \mathcal{C}\times \mathcal{C} \times \mathcal{C}$.
\end{proof}
Theorem (\ref{PosB}) implies that omega limit set of system (\ref{viral1}) are contained in the
following bounded feasible region:

\begin{equation*}
\Gamma=\left\lbrace (x,y,v,z)\in \mathcal{C}^{4}_{+}:\parallel x \parallel\leq \bar{x},\parallel y\parallel\leq  \frac{2M_{1}G_{1}}{\bar{\mu}}, \parallel v \parallel \leq \frac{k M_{2}G_{2}}{u}, \parallel z\parallel \leq \frac{cG_{1}G_{3}M_{3}}{p\tilde{\mu}} \right\rbrace .
\end{equation*}

It can be verified that the region $\Gamma$ is positively invariant with respect to model (\ref{viral1})
and that the model is well posed.
\newline\newline

\section{Existence and uniqueness of equilibria of system}

At any equilibrium we have

\begin{align}
n(x)-f(x,y,v)v&=0,\label{1}\\
f(x,y,v)v-\frac{a}{G_{1}}\varphi_{1}(y)-\frac{p}{G_{1}}\varphi_{1}(y)\varphi_{2}(z)&=0,\label{2}\\
k\varphi_{1}(y)-\frac{u}{G_{2}}v&=0, \label{3}\\
c\varphi_{1}(y)\varphi_{2}(z)-\frac{b}{G_{3}}\varphi_{2}(z)&=0.\label{4}
\end{align}

The system (\ref{viral1}) always has an infection free equilibrium $E_{0}=(\bar{x},0,0,0)$. In addition to $E_{0}$ the system could have two types of Chronic infection Equilibria $E_{1}=(x_{1},y_{1},v_{1},0)$ and $E_{2}=(x_{2},y_{2},v_{2},z_{2})$ in $\Gamma$ where the entries of  $E_{1}$ and $E_{2}$ are strictly positives. The equilibria $E_{1}$ and $E_{2}$ are called CTL-inactivated infection equilibrium (CTL-IE) and CTL-activated infection equilibrium (CTL-AE), respectively.\par 

We define the general reproduction number as

\begin{equation*}
R(x,y,v)=\frac{kG_{1}G_{2}f(x,y,v)}{au},
\end{equation*}

which is the ratio of the per capita production and decay rates of mature viruses at an equilibrium
$(x,y,v,z)$ with $z=0$. In particular, at the infection free equilibrium, $E_{0}$, we denote $R(\bar{x},0,0)$ by $R_{0}$, representing the basic production number for viral infection:

\begin{align}
R_{0}=R(\bar{x},0,0)=\frac{kG_{1}G_{2}f(\bar{x},0,0)}{au}.
\end{align}

From assumption (H2) we have that $\varphi_1$ is invertible, so we can define from equation \eqref{4}:

\begin{equation*}
\hat{y}=\varphi_1^{-1}\left( \frac{b}{cG_3} \right), \quad \hat{v} = \frac{k \varphi_1(\hat{y})G_2}{u}.
\end{equation*}

 Define also $H(x)=n(x)-f(x,\hat{y},\hat{v})\hat{v},$ we have $H(0)=n(0)>0$ and $H(\bar{x})=- f(\bar{x},\hat{y},\hat{v})\hat{v} $, with $f(\bar{x},\hat{y},\hat{v})>f(0,\hat{y},\hat{v})=0$ by ($ii$), so there exists a $\hat{x} \in (0, \bar{x})$ such that $H(\hat{x})=0$. We denote:

 \begin{equation}
 R_1=R(\hat{x}, \hat{y}, \hat{v}),
 \end{equation}

 and refer it as the viral reproduction number. From assumptions on $f$ it is easy to see that $f(\bar{x},0,0)>f (\bar{x},y,v)$, for all $y,v>0$, moreover for all $x \in [0, \bar{x})$ we have $f(x,y,v)<f(\bar{x},y,v)$, so:

 \begin{equation}
 R_0= R(\bar{x},0,0)>R(\bar{x},y,v)>R(x,y,v), \quad \forall x \in [0, \bar{x}),y,v>0.
 \end{equation}

 Particularly, $R_0>R_1$. The basic reproduction number for the CTL response is given by:

\begin{equation*}
R_{CTL}= \frac{cG_{3}\varphi_{1}(y_{1})}{b}.
\end{equation*}

For proof of the existence and uniqueness of equilibria, we require two additional assumption. First, we define the following sets:

\begin{align*}
X_{n}&=\{ \xi\in [0,\bar{x}]:(n(x)-n(\xi))(x-\xi)<0 \textnormal{ for } x\neq \xi, x\in [0,\bar{x}]  \},\\
X_{f}(y,v)&=\{ \xi\in [0,\bar{x}]:(f(x,y,v)-f(\xi,y,v))(x-\xi)<0 \textnormal{ for } x\neq \xi, x\in [0,\bar{x}]  \},\\
X&=\cap_{y,v\in (0,\bar{y})\times (0,\bar{v})}X_{f}(y,v)\cap X_{n}.
\end{align*}

The following conditions are used to guarantee the uniqueness of the equilibria.
\begin{itemize}
\item[$(A_{1})$]The system (\ref{viral1}) has a $E_{1}=(x_{1},y_{1},v_{1},0)$ satisfying $x_{1}\in X$.
\item[$(A_{2})$]The system (\ref{viral1}) has a $E_{2}=(x_{2},y_{2},v_{2},z_{2})$ satisfying $X_{n}\cap X_{f}(y_{2},v_{2})$.
\end{itemize}
\begin{theorem}
Assume that $ i)-iii)$ and $H_1-H_{4}$ are satisfied.
\begin{enumerate}
\item If $R_{0}\leq 1$, then $E_0=(x_0,0,0)$ is the unique equilibrium of  the system(\ref{viral1}).
\item if $R_{1}\leq 1< R_{0}$ then in addition to $E_0$, system (\ref{viral1}) has a CTL inactivated infection equilibrium $E_1$.
\item If $R_0>R_{1}>1$ then, in addition to $E_0$ and $E_1$ system (\ref{viral1}) has a CTL activated infection equilibrium.
\end{enumerate}  \label{theo2}
\end{theorem}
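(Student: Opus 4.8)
The plan is to split every equilibrium according to equation \eqref{4}, which factors as $\varphi_2(z)\big(c\varphi_1(y)-\tfrac b{G_3}\big)=0$: either $z=0$, or $\varphi_1(y)=\tfrac b{cG_3}$, i.e.\ $y=\hat y$. I will reduce each of the two families to a single scalar equation and read off its solvability from $R_0$ and $R_1$, using $H_2$ (so that $\varphi_1,\varphi_2$ are invertible) and hypotheses (i)--(iii) throughout.

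First consider $z=0$. From \eqref{3}, $v=\tfrac{kG_2}{u}\varphi_1(y)$; combining \eqref{1}--\eqref{2}, $n(x)=f(x,y,v)v=\tfrac a{G_1}\varphi_1(y)$. If $y=0$ this forces $v=0$ and $n(x)=0$, so $x=\bar x$ by $H_1$; this is $E_0$, which always exists. If $y>0$, dividing $f(x,y,v)v=\tfrac a{G_1}\varphi_1(y)$ by $\varphi_1(y)$ and inserting $v=\tfrac{kG_2}u\varphi_1(y)$ gives precisely $R(x,y,v)=1$, while $\varphi_1(y)=\tfrac{G_1}a n(x)$ forces $y=Y(x):=\varphi_1^{-1}\!\big(\tfrac{G_1}a n(x)\big)$ and $v=V(x):=\tfrac{kG_1G_2}{au}n(x)$, both positive for $x\in(0,\bar x)$. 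Hence $E_1$ corresponds to a zero of $\Phi(x):=f\big(x,Y(x),V(x)\big)-\tfrac{au}{kG_1G_2}$ on $(0,\bar x)$. Since $f(0,\cdot,\cdot)=0$ gives $\Phi(0)<0$ and $Y(\bar x)=V(\bar x)=0$ gives $\Phi(\bar x)=\tfrac{au}{kG_1G_2}(R_0-1)$, the intermediate value theorem yields such a zero $x_1$ whenever $R_0>1$; when $R_0\le 1$ there is none, because any candidate has $x_1\in(0,\bar x)$ and $y_1,v_1>0$, so $R(x_1,y_1,v_1)<R_0\le 1$ by the monotonicity chain displayed above, contradicting $R(x_1,y_1,v_1)=1$. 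Uniqueness of $E_1$ under $(A_1)$ is a monotonicity argument: if $x_1<x_1'$ were two zeros of $\Phi$ with $x_1\in X$, then $x_1\in X_n$ gives $n(x_1')<n(x_1)$, hence $Y(x_1')<Y(x_1)$ and $V(x_1')<V(x_1)$; by (ii)--(iii) then $f(x_1',Y(x_1'),V(x_1'))>f(x_1,Y(x_1),V(x_1))$, impossible since both equal $\tfrac{au}{kG_1G_2}$.

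Next consider $y=\hat y$, so $v=\hat v$; then \eqref{1} becomes $H(x)=0$ with $H(x)=n(x)-f(x,\hat y,\hat v)\hat v$, and the existence of a zero $\hat x\in(0,\bar x)$ is already established in the text, with uniqueness of $\hat x$ following from $(A_2)$, $H_1$ and (ii) by the same device. Finally \eqref{2} determines $z$ through $\varphi_2(z_2)=\dfrac{G_1f(\hat x,\hat y,\hat v)\hat v-a\varphi_1(\hat y)}{p\varphi_1(\hat y)}$; since $\hat v=\tfrac{kG_2}u\varphi_1(\hat y)$, the right side is positive exactly when $R(\hat x,\hat y,\hat v)=R_1>1$, in which case $z_2=\varphi_2^{-1}(\cdot)>0$ by $H_2$ and $E_2$ exists, while $R_1\le 1$ excludes it.

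Assembling the above: $E_0$ always exists; $E_1$ exists (and is unique under $(A_1)$) precisely when $R_0>1$; $E_2$ exists (and is unique under $(A_2)$) precisely when $R_1>1$; and $R_0>R_1$ is automatic from the monotonicity of $R$. This gives cases 1--3. The step I expect to be the main obstacle is the uniqueness of $E_1$ and $E_2$: because $n$ need not be monotone on $[0,\bar x]$ one cannot argue locally, and must lean on the structural sets $X$, $X_n$ in $(A_1)$--$(A_2)$ to control the sign of $\Phi$ (resp.\ $H$) along the entire interval; the existence statements are then routine applications of the intermediate value theorem once the reductions above are in place.
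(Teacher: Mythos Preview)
Your proposal is correct and follows essentially the same route as the paper: you split on equation \eqref{4}, parametrize the $z=0$ branch by $x$ via $Y(x)=\varphi_1^{-1}\!\big(\tfrac{G_1}{a}n(x)\big)$ and $V(x)=\tfrac{kG_1G_2}{au}n(x)$, apply the intermediate value theorem to $\Phi(x)=f(x,Y(x),V(x))-\tfrac{au}{kG_1G_2}$, and use the strict inequality $R(x,y,v)<R_0$ to exclude $E_1$ when $R_0\le 1$; the $E_2$ branch is handled exactly as in the text. The only noticeable difference is your uniqueness argument for $E_1$: you use $x_1\in X_n$ together with the monotonicity hypotheses (ii)--(iii) on $f$ directly, whereas the paper invokes the set $X_f$; your version is in fact a bit cleaner, since it compares $f$ at points with different $(y,v)$ via (ii)--(iii) rather than through $X_f$, but the idea is the same.
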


When $x=\bar{x}$ and $y=v=z=0$ the equations (\ref{1})-(\ref{4})are satisfied,
therefore $E_{0}=(x_{0},0,0,0)$ is a steady state called the infection free
equilibrium. To proof that it is unique when $R_0<1$, we look for the existence of a positive equilibrium. \par 

To find a positive equilibrium  we proceed as follows:
from equation (\ref{4}) $\varphi_{2}(z)=0$ or $\varphi_{1}(y)=\frac{b}{cG_{1}}$. If $\varphi_{2}(z)=0$ then from assumption $(H_{2})$ we get, $z=0$ and using (\ref{1})-(\ref{3})

\begin{align}\label{fused}
n(x)=f(x,y,v)v=\frac{a\varphi_{1}(y)}{G_{1}}=\frac{auv}{kG_{1}G_{2}}.
\end{align}

By ($H_{2}$), we know that $\varphi^{-1}_{1}$ exists. Solving $n(x)=\frac{a\varphi_{1}(y)}{G_{1}}$ for $y$ gives us that

\begin{equation*}
y(x)=\phi(x)=\varphi^{-1}_{1}\left(\frac{n(x)G_{1}}{a}\right),
\end{equation*}

with $\phi(\bar{x})=0$ and $\phi(0)=y^{0}$  is a unique root of equations $n(0)=\frac{a\varphi_{1}(y^{0})}{G_{1}}$. Solving (\ref{3}) for $v$ we obtain:

\begin{equation*}
v(x)= \frac{k n(x)G_1G_2}{au}.
\end{equation*}

Note that, from ($ii$) we have $f(x,y,v)>f(0,y,v)=0$, $\forall x>0$. So if $E^{*}=(x^{*},y^{*},v^{*},z^{*})$ is a positive equilibrium, then $n(x^{*})=f(x^{*},y^{*},v^{*})v^{*}>0$, so $x^{*} \in (0, \bar{x})$. \par 
Now, using \eqref{fused} define on the interval $[0,\bar{x})$ the function  $G$, 

\begin{align*}
G(x)&= f(x,y(x),v(x))- \frac{n(x)}{v(x)}, \\
&=f\left( x,\phi(x),\frac{n(x)kG_{1}G_{2}}{au} \right)  - \frac{au}{kG_1 G_2},
\end{align*}

we have $G(0)=-\frac{au}{kG_1 G_2} <0$, $G(\bar{x})=f(\bar{x},0,0)-\frac{au}{kG_{1}G_{2}}=\frac{au}{kG_{1}G_{2}}\left(R_{0}-1\right).$ Therefore, if $R_{0} > 1$, then $G(x)$ has a root $x^{*}\in (0, \bar{x})$ such that 

\begin{equation*}
 f(x^{*},y(x^{*}),v(x^{*}))- \frac{n(x^{*})}{v(x^{*})}=0, 
\end{equation*}

or equivalently

\begin{equation*}
n(x^{*})- f(x^{*},y(x^{*}),v(x^{*}))v(x^{*})=0.
\end{equation*}

We conclude that, for $R_0>1$  there exists another equilibrium $E_{1}=(x_{1},y_{1},v_{1},0)$ with $x_{1}=x^{*}\in (0,\bar{x})$,  $y_{1}=\phi(x_1)$  and $v_{1}=\frac{kG_{2}\phi(x_{1})}{u}$. Moreover, using the fact that $R_0>R(x,y,v)$, for all $x,y,v>0$ , when $R_0<1$ we have $R(x,y,v)<1$. Using \eqref{fused} we arrive to

\begin{equation*}
n(x)-f(x,y,v)v>0, \quad \forall x,y,v>0,
\end{equation*}

so equation \eqref{1} never holds. Therefore, $E_1$ exists iff $R_0>1$.
\par 
Next we show that $E_{1}=(x_{1},y_{1},v_{1},0)$ in unique. Suppose, to the contrary, there exists another CTL-IE equilibrium $E^{\ast}_{1}=(x^{\ast}_{1},y^{\ast}_{1},v^{\ast}_{1},0)$. Without of loss generality, we assumed that $x^{\ast}_{1}<x_{1}$. Then $x_{1}\in X_{n}$ implies that $n(x^{\ast}_{1})>n(x_{1})$. By virtue $x_{1}\in X_{h}$, we get $f(x_{1}^{\ast},y_{1}^{\ast},v_{1}^{\ast})<f(x_{1},y_{1},v_{1})$. On other hand, it follows from (\ref{fused}) $f(x_{1}^{\ast},y_{1}^{\ast},v_{1}^{\ast})=f(x_{1},y_{1},v_{1})={au}/{kG_{1}G_{2}}$. This a contradiction, and thus $E_{1}$ is the unique CTL-IE.
\par 
Note that the CTL-AE $E_{2}=(x_{2},y_{2},v_{2},z_{2})$ exists if $(x_{2},y_{2},v_{2},z_{2}) \in \mathbf{R}^{4}_{+}$ satisfies the equilibrium equations \eqref{1}-\eqref{4} and $\varphi_1(y)={b}/{cG_3}$. The four equations of system (\ref{viral1}) and ($H_{2}$) imply that

\begin{align}
y_{2}=\varphi^{-1}_{1} \left( \frac{b}{cG_{3}} \right).
\end{align}

Note that the values $\hat{x},\hat{y},\hat{v}$ used to define $R_1$ clearly satisfy the equilibrium equations. Solving the  equation \eqref{2} for $z$ and using (H2) yields $\hat{z}=\phi_2^{-1} \left( \frac{kG_{1}G_{2}f(\hat{x},\hat{y},\hat{v})-au}{pu} \right) =\phi_2^{-1} \left( \frac{a\left(R_1-1)\right) }{p} \right) $. Using the fact that $\varphi_2^{-1}$ is defined on $[0, \infty)$ we conclude that the CTL-AE $E_{2}=(\hat{x},\hat{y},\hat{v}, \hat{z})$ exists if and only if $R_1>1$.\par 

Now we will prove that $E_{2}$ is unique. Suppose there exists another CTL-AE, $E^{\ast}_{2}=(x_{2}^{\ast},y_{2}^{\ast},v_{2}^{\ast},z_{2}^{\ast})$. Then $y_{2}=y_{2}^{\ast}$ and $v_{2}=v_{2}^{\ast}$. Without of generality, we assume that $x_{2}^{\ast}< x_{2}$. Then $x_{2}\in X_{n}$ implies $n(x_{2}^{\ast})>n(x_{2})$. Note that $n(x_{2}^{\ast})=f(x_{2}^{\ast},y_{2},v_{2})$ and $n(x_{2})=f(x_{2},y_{2},v_{2})$, implies that $f(x_{2}^{\ast},y_{2},v_{2})> f(x_{2},y_{2},v_{2})$. This contradicts that $x_{2}\in X_{f}(y_{2},v_{2})$ and hence $E_{2}$ is unique.

\section{Global stability}
Let $R_0$ and $R_1$ be defined as in previous section.

\begin{theorem}
If $R_{0}<1$, then infection free-Equilibrium $E_{0}$ of model (\ref{viral1}) is globally asymptotically stable. \label{theo3}
\end{theorem}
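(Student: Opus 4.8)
The plan is to construct a Lyapunov functional on the feasible region $\Gamma$ whose derivative along solutions of \eqref{viral1} is nonpositive when $R_0<1$, and then invoke the LaSalle invariance principle on the fading-memory space $\mathcal{C}^4$. The natural candidate, mirroring the standard Volterra-type constructions for such delayed viral models, is
\begin{equation*}
W(t) = g\!\left(\frac{x(t)}{\bar x}\right)\bar x \;+\; y(t) \;+\; \frac{a}{kG_2}\,v(t) \;+\; \frac{p}{c}\,z(t) \;+\; W_{+}(t),
\end{equation*}
where $g(s)=s-1-\ln s\ge 0$ and $W_{+}$ collects the integral (memory) terms that compensate the distributed delays. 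A cleaner first attempt — since at $E_0$ we have $y=v=z=0$ and no logarithm in $y$ is needed — is to take the simpler linear-in-$(y,v,z)$ combination $W = \big(x-\bar x-\int_{\bar x}^{x}\frac{f(\bar x,0,0)}{f(\xi,0,0)}\,d\xi\big)$-style term plus $c_1 y + c_2 v + c_3 z$ plus delay-compensating integrals of the form $\int_0^\infty f_1(\tau)e^{-\alpha_1\tau}\!\int_{t-\tau}^{t} f(x(\sigma),y(\sigma),v(\sigma))v(\sigma)\,d\sigma\,d\tau$ and analogous terms built from $f_2$ and $\varphi_1$. The coefficients $c_1,c_2,c_3$ are chosen so that the non-delayed cross terms telescope.

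The key steps, in order: (1) differentiate the $x$-part, using $n(\bar x)=0$ and hypothesis $(H_1)$ to get a term $\le 0$ of the form $\big(1-\frac{f(\bar x,0,0)}{f(x,0,0)}\big)\big(n(x)-n(\bar x)\big)$ — here monotonicity of $n$ from $(H_1)$ and positivity/monotonicity of $f$ in $x$ from (ii) force the correct sign; (2) differentiate the delay integrals so that each $\int_0^\infty f_i(\tau)(\cdots)(t-\tau)\,d\tau$ term produced by $\dot y$ and $\dot v$ is cancelled, leaving only "present-time" quantities (this uses $\int_0^\infty f_i(\tau)\,d\tau=1$, hence the constants $G_i$ appear exactly where the equilibrium relations in \eqref{1}–\eqref{4} put them); (3) bound the surviving infection term using $f(x,y,v)\le f(\bar x,0,0)$ on $\Gamma$ (from (ii) and (iii)), together with $\varphi_1(y)\ge k_1 y$ and $\varphi_2(z)\ge k_2 z$ from $(H_2)$, to produce a coefficient proportional to $R_0-1$ in front of $v$; (4) observe that when $R_0<1$ this coefficient is negative, so $\dot W\le 0$, with equality forcing $x=\bar x$ and $v=0$, whence $y=0$ and $z=0$; (5) apply LaSalle to conclude that every solution converges to $E_0$, and combine with the local stability (which also follows since $\dot W\le0$ near $E_0$) to get global asymptotic stability.

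The main obstacle I anticipate is step (3): getting the right linear estimate for the term $f(x(t),y(t),v(t))v(t)$ against $\frac{au}{kG_1G_2}v(t)$ uniformly on $\Gamma$ without a logarithmic Volterra term in $x$. The inequality $R(x,y,v)\le R_0$ established in the excerpt is exactly the tool for this, but it must be deployed carefully so that the $x$-derivative's negative contribution and the infection term's estimate are compatible — i.e., one must not "spend" the monotonicity of $f$ twice. A secondary technical point is justifying differentiation under the integral sign and the applicability of LaSalle's principle on the infinite-delay phase space $\mathcal{C}$; this is handled by the fading-memory assumptions on $\mathcal{C}$ and the exponential-decay condition $\int_0^\infty f_3(\tau)e^{s\tau}\,d\tau<\infty$ already imposed, which guarantee relative compactness of bounded orbits. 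Boundedness of orbits in $\Gamma$ is already supplied by Theorem \ref{PosB}, so the functional is well-defined and its sublevel sets are the objects to which LaSalle applies.
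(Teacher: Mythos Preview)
Your proposal is essentially the paper's own argument: the Lyapunov functional is precisely
\[
x-\bar x-\int_{\bar x}^{x}\frac{f(\bar x,0,0)}{f(\xi,0,0)}\,d\xi+\frac{1}{G_1}y+\frac{a}{kG_1G_2}v+\frac{p}{cG_1G_3}z
\]
plus the three delay-compensating double integrals you describe, and its derivative simplifies to
\[
\bigl(n(x)-n(\bar x)\bigr)\Bigl(1-\tfrac{f(\bar x,0,0)}{f(x,0,0)}\Bigr)+\frac{au}{kG_1G_2}\,v\Bigl(\tfrac{f(x,y,v)}{f(x,0,0)}R_0-1\Bigr)-\frac{pb}{cG_1G_3}\varphi_2(z).
\]
Your anticipated obstacle in step~(3) does not arise: the $x$-monotonicity of $f$ from hypothesis~(ii) is used only for the sign of the first factor, while the estimate $\frac{f(x,y,v)}{f(x,0,0)}\le 1$ needed in the second bracket comes solely from hypothesis~(iii), so nothing is ``spent twice'' and the $\varphi_i$-lower bounds from $(H_2)$ are not needed here at all.
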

\begin{proof}
Define a Lyapunov functional

\begin{align*}
V_{1}&=x-x_{0}+\int^{x}_{x_{0}}\frac{f(x_{0},0,0)}{f(s,0,0)}ds+\frac{1}{G_{1}} y+\frac{a}{kG_{1}G_{2}}v+\frac{p}{cG_{1}G_{3}}z\\
&+\frac{1}{G_{1}}\int^{\infty}_{0}f_{1}(\tau) e^{-\alpha_1 \tau }\int^{t}_{t-\tau}f(x(s),y(s),v(s))v(s)dsd\tau+\frac{a}{G_{1}G_{2}}\int^{\infty}_{0}f_{2}(\tau)e^{-\alpha_{2}\tau}\int^{t}_{t-\tau}\varphi_{1}(y(s))dsd\tau\\
&+\frac{p}{G_{1}G_{3}}\int^{\infty}_{0}f_{3}(\tau)\int^{t}_{t-\tau}\varphi_{1}(y(s))\varphi_{2}(z(s))dsd\tau,
\end{align*}

where $G_{i}=\int^{\infty}_{0}f_{i}(\tau)$,$i=1,2,3$.

Calculating the derivative of $V_{1}$ along the positive solution of system (\ref{viral1}), it follows that

\begin{align*}
\dot V_1&=\left(1-\frac{f(x_{0},0,0)}{f(x,0,0)}\right)\dot{x}+\frac{1}{G_{1}}\dot{y}+\frac{a}{kG_{1}G_{2}}\dot{v}+\frac{p}{cG_{1}G_{3}}\dot{z}\\
&+f(x,y,v)v-\frac{1}{G_{1}}\int^{\infty}_{0}f_{1}(\tau)e^{-\alpha_{1}\tau}f(x(t-\tau),y(t-\tau),v(t-\tau))d\tau\\
&+\frac{a}{G_{1}}\varphi_{1}(y)-\frac{a}{G_{1}G_{2}}\int^{\infty}_{0}f_{2}(\tau)e^{-\alpha_{2}\tau}\varphi_{1}(y(t-\tau))d\tau\\
&+\frac{p}{G_{1}}\varphi_{1}(y)\varphi_{2}(z)-\frac{p}{G_{1}G_{3}}\int^{\infty}_{0}f_{3}(\tau)\varphi_{1}(y(t-\tau))\varphi_{2}(z(t-\tau))d\tau\\
&=\left(1-\frac{f(x_{0},0,0)}{f(x,0,0)}\right)\left(n(x)-f(x,y,v)v\right)\\
&+\frac{1}{G_{1}}\left( \int^{\infty}_{0}f_{1}(\tau)e^{-\alpha_{1}\tau}f(x(t-\tau),y(t-\tau),v(t-\tau))v(t-\tau)-a\varphi_{1}(y)-p\varphi_{1}(y)\varphi_{2}(z)\right)\\
&+\frac{a}{kG_{1}G_{2}}\left(k\int^{\infty}_{0}f_{2}(\tau)e^{-\alpha_{2}\tau}\varphi_{1}(y(t-\tau))-uv\right)+\frac{p}{cG_{1}G_{3}}\left(c\int^{\infty}_{0}f_{3}(\tau)\varphi_{1}(y(t-\tau)\varphi_{2}(z(t-\tau))-b\varphi_{2}(z)\right)\\
&+f(x,y,v)v-\frac{1}{G_{1}}\int^{\infty}_{0}f_{1}(\tau)e^{-\alpha_{1}\tau}f(x(t-\tau),y(t-\tau),v(t-\tau))d\tau\\
&+\frac{a}{G_{1}}\varphi_{1}(y)-\frac{a}{G_{1}G_{2}}\int^{\infty}_{0}f_{2}(\tau)e^{-\alpha_{2}\tau}\varphi_{1}(y(t-\tau))d\tau\\
&+\frac{p}{G_{1}}\varphi_{1}(y)\varphi_{2}(z)-\frac{p}{G_{1}G_{3}}\int^{\infty}_{0}f_{3}(\tau)\varphi_{1}(y(t-\tau))\varphi_{2}(z(t-\tau))d\tau.\\
\end{align*}

Using $n(x_{0})=0$ and simplifying, we get

\begin{align*}
\dot V_1&=\left(n(x)-n(x_{0})\right)\left(1-\frac{f(x_{0},0,0)}{f(x,0,0)}\right)+\frac{au}{kG_{1}G_{2}}v\left(\frac{f(x,y,v)}{f(x,0,0)}\frac{f(x_{0},0,0)kG_{1}G_{2}}{au}-1\right)-\frac{pb}{cG_{1}G_{3}}\varphi_{2}(z)\\
&=\left(n(x)-n(x_{0})\right)\left(1-\frac{f(x_{0},0,0)}{f(x,0,0)}\right)+\frac{au}{kG_{1}G_{2}}v\left(\frac{f(x,y,v)}{f(x,0,0)}R_{0}-1\right)-\frac{pb}{cG_{1}G_{3}}\varphi_{2}(z)\\
&\leq \left(n(x)-n(x_{0})\right)\left(1-\frac{f(x_{0},0,0)}{f(x,0,0)}\right)+\frac{au}{kG_{1}G_{2}}v\left(R_{0}-1\right)-\frac{pb}{cG_{1}G_{3}}\varphi_{2}(z).
\end{align*}

Using the  following inequalities:

\begin{align*}
n(x)-n(x_{0})< 0,  \ \ 1-\frac{f(x_{0},0,0)}{f(x,0,0)} &\geq 0 \ \ for \ x\geq x_{0},\\
n(x)-n(x_{0})> 0,  \ \ 1-\frac{f(x_{0},0,0)}{f(x,0,0)} &\leq 0 \ \ for \ x< x_{0}.
\end{align*}

We have that

$$\left(n(x)-n(x_{0})\right)\left(1-\frac{f(x_{0},0,0)}{f(x,0,0)}\right)\leq 0.$$

Since $R_{0}\leq 1$, we have $\dot{V}_1\leq 0$. Therefore the disease free $E_{1}$ is stable, $\dot{V}_1=0$ if  and only if $x=x_{0}, y=0, v=0, z=0$. So, the largest compact invariant set in $\{(x,y,v,z):\dot{V}_1=0 \}$ is just the singleton $E_{1}$. From LaSalle invariance principle , we conclude that $E_{1}$ is globally asymptotically stable.

\end{proof}
\begin{theorem}
If $R_{0}>1$ and $R_{1}<1$, then CTL-IE $E_{1}$, of model (\ref{viral1}) is globally asymptotically stable. \label{theo4}
\end{theorem}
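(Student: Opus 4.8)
\medskip
\noindent\textbf{Plan of proof.} The idea is to build a Volterra--Lyapunov functional centred at $E_1=(x_1,y_1,v_1,0)$, modelled on $V_1$ from the proof of Theorem~\ref{theo3} but with the translation parts replaced by the Volterra function $g(s)=s-1-\ln s$, which satisfies $g\ge0$ with $g(s)=0$ iff $s=1$. Recall the equilibrium relations at $E_1$: $n(x_1)=f(x_1,y_1,v_1)v_1=\frac{a}{G_1}\varphi_1(y_1)=\frac{au}{kG_1G_2}v_1$, so in particular $f(x_1,y_1,v_1)=\frac{au}{kG_1G_2}$ and $R(x_1,y_1,v_1)=1$. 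Writing $h^{*}:=f(x_1,y_1,v_1)v_1$, I would take
\begin{align*}
V_2&=\Bigl(x-x_1-\int_{x_1}^{x}\frac{f(x_1,y_1,v_1)}{f(\sigma,y_1,v_1)}\,d\sigma\Bigr)+\frac{1}{G_1}\int_{y_1}^{y}\Bigl(1-\frac{\varphi_1(y_1)}{\varphi_1(\sigma)}\Bigr)d\sigma+\frac{a}{kG_1G_2}\,v_1\,g\!\Bigl(\frac{v}{v_1}\Bigr)+\frac{p}{cG_1G_3}\,z\\
&\quad+\frac{h^{*}}{G_1}\int_{0}^{\infty}\!f_1(\tau)e^{-\alpha_1\tau}\!\int_{t-\tau}^{t}\!g\!\Bigl(\frac{f(x(s),y(s),v(s))v(s)}{h^{*}}\Bigr)ds\,d\tau+\frac{a\varphi_1(y_1)}{G_1G_2}\int_{0}^{\infty}\!f_2(\tau)e^{-\alpha_2\tau}\!\int_{t-\tau}^{t}\!g\!\Bigl(\frac{\varphi_1(y(s))}{\varphi_1(y_1)}\Bigr)ds\,d\tau\\
&\quad+\frac{p}{G_1G_3}\int_{0}^{\infty}\!f_3(\tau)\!\int_{t-\tau}^{t}\!\varphi_1(y(s))\varphi_2(z(s))\,ds\,d\tau.
\end{align*}
Since $f(\cdot,y_1,v_1)$ and $\varphi_1$ are increasing the first three integrals are nonnegative, and the memory integrals are nonnegative because $g\ge0$; hence $V_2\ge0$ on $\Gamma$ with $V_2=0$ at $E_1$.

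First I would differentiate $V_2$ along solutions of \eqref{viral1}, substitute the four right--hand sides, and use the equilibrium relations above to clear the constant coefficients, exactly as for $V_1$. Differentiating the memory integrals yields present--time terms $f(x,y,v)v$, $\varphi_1(y)$ and $\varphi_1(y)\varphi_2(z)$ (through $\int_0^{\infty}f_i=G_i$) which cancel the matching terms coming from $\dot x$, $\dot v$ and from the $-a\varphi_1(y)-p\varphi_1(y)\varphi_2(z)$ part of $\dot y$, while the logarithmic and delayed pieces are organized so as to telescope. After this the derivative should collapse into three groups,
\begin{align*}
\dot V_2&=\bigl(n(x)-n(x_1)\bigr)\Bigl(1-\frac{f(x_1,y_1,v_1)}{f(x,y_1,v_1)}\Bigr)+\frac{p}{G_1}\Bigl(\varphi_1(y_1)-\frac{b}{cG_3}\Bigr)\varphi_2(z)-h^{*}\,\mathcal{G},
\end{align*}
where $\mathcal{G}\ge0$ is a finite sum of $g$--evaluations at ratios of the type $\frac{f(x_1,y_1,v_1)}{f(x,y_1,v_1)}$, $\frac{\varphi_1(y(t))\,h^{*}}{\varphi_1(y_1)\,f(x(t-\tau),y(t-\tau),v(t-\tau))v(t-\tau)}$ and $\frac{v_1\,\varphi_1(y(t-\tau))}{v(t)\,\varphi_1(y_1)}$, whose product over the list equals $1$.

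It remains to sign the first two groups. The first product is $\le0$: by $(H_1)$ and the fact that $x_1\in X_n$ (part of assumption $(A_1)$ used to locate $E_1$) the factors $n(x)-n(x_1)$ and $x-x_1$ have opposite signs, whereas $x\mapsto f(x,y_1,v_1)$ is increasing by $(ii)$, so $1-\frac{f(x_1,y_1,v_1)}{f(x,y_1,v_1)}$ has the sign of $x-x_1$; the product is strictly negative off $x=x_1$. The $\varphi_2(z)$--group is $\le0$ exactly when $\varphi_1(y_1)\le b/(cG_3)$, i.e. when $R_{CTL}\le1$; the hypothesis $R_1<1$ secures this, since by Section~2 one has $\varphi_1(\phi(\hat x))=R_1\varphi_1(\hat y)$ together with $R(x_1,y_1,v_1)=1$, so $R_1=1$ forces $\hat x=x_1$ (uniqueness of the root of $G$) and hence $y_1=\hat y$, whence $R_1<1\Rightarrow y_1<\hat y\Rightarrow\varphi_1(y_1)<b/(cG_3)$ and the group is strictly negative off $z=0$. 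The \emph{main obstacle} I anticipate is the telescoping step itself: for a genuinely general incidence $f(x,y,v)$, which --- unlike $\beta xv$ --- does not factor through its equilibrium value, one must verify that every leftover cross term reassembles into one of the $-h^{*}g(\cdot)$'s, and this closure is precisely where the monotonicity hypotheses $(ii)$--$(iii)$ on $f$ are used.

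Combining the three groups gives $\dot V_2\le0$ on $\Gamma$, so $E_1$ is stable. On $\{\dot V_2=0\}$ one is forced to have $n(x)=n(x_1)$ (hence $x=x_1$), $\varphi_2(z)=0$ (hence $z=0$) and every $g$--argument equal to $1$, which along an invariant orbit propagates to $y\equiv y_1$ and $v\equiv v_1$; thus the largest invariant subset of $\{\dot V_2=0\}$ is the singleton $\{E_1\}$. By the LaSalle invariance principle on the fading--memory space $\mathcal{C}^4$, whose applicability rests on the ultimate boundedness from Theorem~\ref{PosB}, $E_1$ is globally asymptotically stable.
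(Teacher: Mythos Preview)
Your functional $V_2$ coincides (up to notation) with the paper's $\hat L$, and the overall strategy---differentiate, split into sign--definite pieces, invoke LaSalle---is the same. The gap is in your description of what $\dot V_2$ actually looks like after the equilibrium substitutions.

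The derivative does \emph{not} collapse into only the three groups you list. In the paper's computation, besides the $(n(x)-n(x_1))$--factor, the $\varphi_2(z)$--factor, and a collection of $g$--terms, there is a further residual
\[
\frac{a\varphi_1(y_1)}{G_1}\Bigl(-1-\frac{v}{v_1}+\frac{f(x,y_1,v_1)}{f(x,y,v)}+\frac{v}{v_1}\,\frac{f(x,y,v)}{f(x,y_1,v_1)}\Bigr)
=\frac{a\varphi_1(y_1)}{G_1}\Bigl(1-\frac{f(x,y,v)}{f(x,y_1,v_1)}\Bigr)\Bigl(\frac{f(x,y_1,v_1)}{f(x,y,v)}-\frac{v}{v_1}\Bigr),
\]
which is \emph{not} of the form $-h^*g(\cdot)$ and cannot be absorbed into your $\mathcal G$. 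Relatedly, your claim that the arguments of the $g$'s have product equal to $1$ is false: the four ratios that actually occur are $\dfrac{f(x_1,y_1,v_1)}{f(x,y_1,v_1)}$, $\dfrac{f(x,y_1,v_1)}{f(x,y,v)}$, $\dfrac{\varphi_1(y_1)}{\varphi_1(y)}\dfrac{f(x_\tau,y_\tau,v_\tau)v_\tau}{f(x_1,y_1,v_1)v_1}$ and $\dfrac{\varphi_1(y_\tau)v_1}{\varphi_1(y_1)v}$, and their product is $\dfrac{f(x_\tau,y_\tau,v_\tau)v_\tau\,\varphi_1(y_\tau)}{f(x,y,v)\,\varphi_1(y)\,v}\neq 1$. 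You are right that hypothesis~(iii) is the key here, but it enters by making the factored cross term above nonpositive on its own, not by closing a telescoping identity among $g$--values. So your ``three groups'' should be four, and the fourth is handled by the factorization, not by $g$.

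A smaller point: the coefficient of $\varphi_2(z)$ you obtain is $\dfrac{p}{G_1}\bigl(\varphi_1(y_1)-\tfrac{b}{cG_3}\bigr)$, i.e.\ it is governed by $R_{CTL}=\dfrac{cG_3\varphi_1(y_1)}{b}$, not literally by $R_1$. The paper simply writes $(R_1-1)$ there without further comment, so you are in fact being more careful than the paper; but your bridge ``$R_1=1\Rightarrow\hat x=x_1$, hence $R_1<1\Rightarrow y_1<\hat y$'' skips the monotonicity step that would make the implication rigorous.
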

\begin{proof}
Consider the following Lyapunov functional:

$$V_{1}=\hat{L}(t)$$

where

\begin{align*}
\hat{L}&=x-x_{1}-\int^{x}_{x_{1}}\frac{f(x_{1},y_{1},v_{1})}{f(s,y_{1},v_{1})}ds+\frac{1}{G_{1}}\int^{y}_{y_{1}}\left(1-\frac{\varphi_{1}(y_{1})}{\varphi_{1}(\sigma)}\right)d\sigma+\frac{a}{kG_{1}G_{2}}\int^{v}_{v_{1}}\left(1-\frac{v_{1}}{\sigma}\right)d\sigma\\
&+\frac{p}{cG_{1}G_{3}}z\\
&+\frac{f(x_{1},y_{1},v_{1})v_{1}}{G_{1}}\int^{\infty}_{0}f_{1}(\tau)e^{-\alpha_{1}\tau}\int^{\tau}_{0}H\left(\frac{f(x(t-\omega),y(t-\omega),v(t-\omega))v(t-\omega)}{f(x_{1},y_{1},v_{1})v_{1}}\right)d\omega d\tau\\
&+ \frac{a\varphi_{1}(y_{1})}{G_{1}G_{2}}\int^{\infty}_{0}f_{2}(\tau)e^{-\alpha_{2}\tau}\int^{\tau}_{0}H\left( \frac{\varphi_{1}(t-\omega)}{\varphi_{1}(y_{1})}\right)d\omega d\tau\\
&+\frac{p}{G_{1}G_{3}}\int^{\infty}_{0}f_{3}(\tau)\int^{\tau}_{0}\varphi_{1}(y(t-\omega))\varphi_{2}(z(t-\omega))d\omega d\tau.
\end{align*}

At infected equilibrium

\begin{align}
n(x_{1})-f(x_{1},y_{1},v_{1})v_{1}&=0,\label{ig1}\\
f(x_{1},y_{1},v_{1})v_{1}&=\frac{a\varphi_{1}(y_{1})}{G_{1}} ,\\
\frac{u}{kG_{2}}&=\frac{\varphi_{1}(y_{1})}{v_{1}}.\label{ig3}
\end{align}

Calculating the derivative of $\hat{L}$ a long positive solutions of (\ref{viral1}), we get

\begin{align*}
\dot{V}_1&=\left(1-\frac{f(x_{1},y_{1},v_{1})}{f(x,y_{1},v_{1})}\right)\dot{x}+\frac{1}{G_{1}}\left(1-\frac{\varphi_{1}(y_{1})}{\varphi_{1}(y)}\right)\dot{y}+\frac{a}{kG_{1}G_{2}}\left(1-\frac{v_{1}}{v}\right)\dot{v}+\frac{p}{cG_{1}G_{2}}\dot{z}\\
&+\frac{f(x_{1},y_{1},v_{1})v_{1}}{G_{1}}\int^{\infty}_{0}f_{1}(\tau)e^{-\alpha_{1}\tau}\\
&\left(H\left(\frac{f(x,y,v)v}{f(x_{1},y_{1},v_{1})v_{1}}\right)-H\left(\frac{f(x(t-\tau),y(t-\tau),v(t-\tau))v(t-\tau)}{f(x_{1},y_{1},v_{1})v_{1}}\right)\right)d\tau \\
&+ \frac{a\varphi_{1}(y_{1})}{G_{1}G_{2}}\int^{\infty}_{0}f_{2}(\tau)e^{-\alpha_{2}\tau}\left( H\left( \frac{\varphi_{1}(y)}{\varphi_{1}(y_{1})}\right)-H\left(\frac{\varphi_{1}(y(t-\tau))}{\varphi_{1}(y_{1})}\right)\right) d\tau\\
&+\frac{p}{G_{1}}\varphi_{1}(y)\varphi_{2}(z)-\frac{p}{G_{1}G_{3}}\int^{\infty}_{0}f_{3}(\tau)\varphi_{1}(y(t-\tau))\varphi_{2}(z(t-\tau)) d\tau \\
\end{align*}

\begin{align*}
&=\left(1-\frac{f(x_{1},y_{1},v_{1})}{f(x,y_{1},v_{1})}\right)\left(n(x)-f(x,y,v)v\right)\\
&+\frac{1}{G_{1}}\left(1-\frac{\varphi_{1}(y_{1})}{\varphi_{1}(y)}\right)\left(\int^{\infty}_{0}f_{1}(\tau)e^{-\alpha_{1}\tau}f(x(t-\tau),y(t-\tau),v(t-\tau))v(t-\tau)-a\varphi_{1}(y)-p\varphi_{1}(y)\varphi_{2}(z)\right)\\
&+\frac{a}{kG_{1}G_{2}}\left(1-\frac{v_{1}}{v}\right)\left(k\int^{\infty}_{0}f_{2}(\tau)e^{-\alpha_{2}\tau}\varphi_{1}(y-\tau)-uv\right)\\
&+\frac{p}{cG_{1}G_{2}}\left(c\int^{\infty}_{0}f_{3}(\tau)\varphi_{1}(y(t-\tau))\varphi_{2}(z(t-\tau)) -b\varphi_{2}(z)\right)\\
&+\frac{f(x_{1},y_{1},v_{1})v_{1}}{G_{1}}\int^{\infty}_{0}f_{1}(\tau)e^{-\alpha_{1}\tau}\left(H\left(\frac{f(x,y,v)v}{f(x_{1},y_{1},v_{1})v_{1}}\right)-H\left(\frac{f(x(t-\tau),y(t-\tau),v(t-\tau))v(t-\tau)}{f(x_{1},y_{1},v_{1})v_{1}}\right)\right)d\tau \\
&+ \frac{a\varphi_{1}(y_{1})}{G_{1}G_{2}}\int^{\infty}_{0}f_{2}(\tau)e^{-\alpha_{2}\tau}\left( H\left( \frac{\varphi_{1}(y)}{\varphi_{1}(y_{1})}\right)-H\left(\frac{\varphi_{1}(y(t-\tau))}{\varphi_{1}(y_{1})}\right)\right) d\tau\\
&+\frac{p}{G_{1}}\varphi_{1}(y)\varphi_{2}(z)-\frac{p}{G_{1}G_{3}}\int^{\infty}_{0}f_{3}(\tau)\varphi_{1}(y(t-\tau))\varphi_{2}(z(t-\tau)) d\tau. \\
\end{align*}

Using (\ref{ig1})-(\ref{ig3}), we, get

\begin{align*}
\dot{V}_1&=\left(n(x)-n(x_{1})\right)\left(1-\frac{f(x_{1},y_{1},v_{1})}{f(x,y_{1},v_{1})}\right)\\
&+\frac{a\varphi_{1}(y_{1})}{G_{1}}\left(1-\frac{f(x_{1},y_{1},v_{1})}{f(x,y_{1},v_{1})}+\frac{f(x,y,v)v}{f(x,y_{1},v_{1})v_{1}}\right)\\
&+\frac{a\varphi_{1}(y_{1})}{G_{1}}\left(1-\frac{1}{G_{1}}\int^{\infty}_{0}f_{1}(\tau)e^{-\alpha_{1}\tau}\frac{\varphi_{1}(y_{1})}{\varphi_{1}(y)}\frac{v(t-\tau)}{v_{1}}\frac{f(x(t-\tau,y(t-\tau),v(t-\tau)))}{f(x_{1},y_{1},v_{1})}\right)d\tau\\
&+\frac{a\varphi_{1}(y_{1})}{G_{1}}\left[\frac{1}{G_{1}}\int^{\infty}_{0}f_{1}(\tau)e^{-\alpha_{1}\tau}\textnormal{ln}\left(\frac{f(x(t-\tau),y(t-\tau),v(t-\tau))v(t-\tau)}{f(x_{1},y_{1},v_{1})v_{1}}\right)d\tau \right]\\
&+\frac{a\varphi_{1}(y_{1})}{G_{1}}\left[\frac{1}{G_{2}}\int^{\infty}_{0}f_{2}(\tau)e^{-\alpha_{2}\tau}\textnormal{ln}\left(\frac{\varphi_{1}(y(t-\tau))}{\varphi_{1}(y_{1})}d\tau\right)-\textnormal{ln}\left(\frac{f(x,y,v)v\varphi_{1}(y)}{f(x_{1},y_{1},v_{1})v_{1}\varphi_{1}(y_{1})}\right)\right]\\
&+\frac{pb\varphi_{1}(z)}{cG_{1}G_{3}}\left(\frac{cG_{3}\varphi_{1}(y_{1})}{b}-1\right).
\end{align*}

Therefore

\begin{align*}
\dot{V}_1&=\left(n(x)-n(x_{1})\right)\left(1-\frac{f(x_{1},y_{1},v_{1})}{f(x,y_{1},v_{1})}\right)\\
&+\frac{a\varphi_{1}(y_{1})}{G_{1}}\left(1-\frac{v}{v_{1}}+\frac{f(x,y_{1},v_{1})}{f(x,y,v)}+\frac{f(x,y,v)v}{f(x,y_{1},v_{1})v_{1}}\right)\\
&-\frac{a\varphi_{1}(y_{1})}{G_{1}}\left[H\left(\frac{f(x_{1},y_{1},v_{1})}{f(x,y_{1},v_{1})}\right)+H\left(\frac{f(x,y_{1},v_{1})}{f(x,y,v)}\right)\right]\\
&-\frac{a\varphi_{1}(y_{1})}{G_{1}}\left[\frac{1}{G_{1}}\int^{\infty}_{0}f_{1}(\tau)e^{-\alpha_{1}\tau}H\left(\frac{\varphi_{1}(y_{1})}{\varphi_{1}(y)}\frac{v(t-\tau)}{v_{1}}\frac{f(x(t-\tau),y(t-\tau),z(t-\tau))}{f(x_{1},y_{1},v_{1})}\right)d\tau \right]\\
&-\frac{a\varphi_{1}(y_{1})}{G_{1}}\left[\frac{1}{G_{2}}\int^{\infty}_{0}f_{2}(\tau)e^{-\alpha_{2}\tau}H\left(\frac{\varphi_{1}(y(t-\tau))v_{1}}{\varphi_{1}(y_{1})v}\right)d\tau\right]\\
&+\frac{pb\varphi_{1}(z)}{cG_{1}G_{3}}\left(R_{1}-1\right).
\end{align*}

Using the inequalities:

\begin{align*}
n(x)-n(x_{1})<0, \ \  \ 1-\frac{f(x_{1},y_{1},v_{1})}{f(x,y_{1},v_{1})} &\geq 0 \ \ for \ x\geq x_{1},\\
n(x)-n(x_{1})>0, \ \ \  1-\frac{f(x_{1},y_{1},v_{1})}{f(x,y_{1},v_{1})} &\leq 0 \ \ for \ x< x_{1}.
\end{align*}

We have that

$$\left(1-\frac{x}{x_{1}}\right)\left(1-\frac{f(x_{1},y_{1},v_{1})}{f(x,y_{1},v_{1})}\right)\leq 0,$$

\begin{align*}
&-1-\frac{v}{v_{1}}+\frac{f(x,y_{1},v_{1})}{f(x,y,v)}+\frac{v}{v_{1}}\frac{f(x,y,v)}{f(x,y_{1},v_{1})}\\
&=\left(1-\frac{f(x,y,v)}{f(x,y_{1},v_{1})} \right)\left(\frac{f(x,y_{1},v_{1})}{f(x,y,v)}-\frac{v}{v_{1}}\right)\leq 0.
\end{align*}

Since $R_{1}\leq 1$, we have $\dot{V_{1}}\leq 0$, thus $E_{1}$ is stable. $\dot{V_{1}}=0$ if and only if
$x=x_{1}, y=y_{1}, v=_{1}, z=0$. So, the largest compact invariant set in $\{(x,y,v,z):\dot{v_{1}}=0\}$ is
the singleton $E_{1}$. From LaSalle invariance principle, we conclude that $E_{1}$ is globally
asymptotically stable.

\end{proof}
\begin{theorem}
Assume that $i)-H_{4}$ hold and $f_{3}(\tau)=\delta (\tau)$.
If $R_{1}>1$, then the CTL-AE, $E_{2}$ is Globally asymptotically stable. \label{theo5}
\end{theorem}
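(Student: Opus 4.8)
The plan is to carry out the Lyapunov--LaSalle scheme of Theorems \ref{theo3} and \ref{theo4} one more time, now centred at the CTL-activated equilibrium $E_2=(x_2,y_2,v_2,z_2)$, which exists and is unique under $R_1>1$ by Theorem \ref{theo2}. Because $f_3(\tau)=\delta(\tau)$ we have $G_3=1$ and the last equation of \eqref{viral1} reduces to the undelayed $\dot z=c\varphi_1(y)\varphi_2(z)-b\varphi_2(z)$; the four equilibrium identities then read $n(x_2)=f(x_2,y_2,v_2)v_2$, $f(x_2,y_2,v_2)v_2=\tfrac{a}{G_1}\varphi_1(y_2)+\tfrac{p}{G_1}\varphi_1(y_2)\varphi_2(z_2)$, $\tfrac{u}{kG_2}=\tfrac{\varphi_1(y_2)}{v_2}$ and $c\varphi_1(y_2)=b$. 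The last of these was unavailable at $E_1$ and is the structural reason the argument closes at the activated equilibrium.

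First I would propose the functional
\[
V_2=x-x_2-\int_{x_2}^{x}\frac{f(x_2,y_2,v_2)}{f(s,y_2,v_2)}\,ds+\frac{1}{G_1}\int_{y_2}^{y}\!\Bigl(1-\frac{\varphi_1(y_2)}{\varphi_1(\sigma)}\Bigr)d\sigma+\frac{a}{kG_1G_2}\int_{v_2}^{v}\!\Bigl(1-\frac{v_2}{\sigma}\Bigr)d\sigma+\frac{p}{cG_1}\int_{z_2}^{z}\!\Bigl(1-\frac{\varphi_2(z_2)}{\varphi_2(\sigma)}\Bigr)d\sigma+V_{\mathrm{mem}},
\]
where, exactly as in Theorem \ref{theo4}, $V_{\mathrm{mem}}$ is the sum of the two distributed-memory terms built from $H(r)=r-1-\ln r\ge 0$: one over $f_1$ in the ratio $\tfrac{f(x(t-\omega),y(t-\omega),v(t-\omega))v(t-\omega)}{f(x_2,y_2,v_2)v_2}$ with weight $\tfrac{f(x_2,y_2,v_2)v_2}{G_1}f_1(\tau)e^{-\alpha_1\tau}$, and one over $f_2$ in the ratio $\tfrac{\varphi_1(y(t-\omega))}{\varphi_1(y_2)}$ with weight $\tfrac{a\varphi_1(y_2)}{G_1G_2}f_2(\tau)e^{-\alpha_2\tau}$; no $f_3$-memory term is needed since $f_3=\delta$. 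By $(H_2)$, $(H_3)$ and $(ii)$ every integrand is well defined, and $V_2\ge 0$ on $\Gamma$ with equality only at $E_2$.

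Then I would differentiate $V_2$ along \eqref{viral1} and substitute the four equilibrium identities. The terms produced by the $-a\varphi_1(y)$ and $-p\varphi_1(y)\varphi_2(z)$ parts of $\dot y$ and by the $z$-integral (the latter simplified through $b=c\varphi_1(y_2)$) combine into a single coefficient $f(x_2,y_2,v_2)v_2$, using $\tfrac1{G_1}\bigl(a+p\varphi_2(z_2)\bigr)=\tfrac{f(x_2,y_2,v_2)v_2}{\varphi_1(y_2)}$; in particular the $\varphi_1(y)\varphi_2(z)$ cross-terms cancel and no CTL remainder of the type $\propto(R_1-1)$ survives. After collecting the logarithms into $H$'s, $\dot V_2$ acquires the same shape as $\dot V_1$ in Theorem \ref{theo4}, with no extra term: a contribution $\bigl(n(x)-n(x_2)\bigr)\bigl(1-\tfrac{f(x_2,y_2,v_2)}{f(x,y_2,v_2)}\bigr)$, which is $\le 0$ by $(H_1)$ and the monotonicity of $f$ in $x$ ($(ii)$); a contribution $f(x_2,y_2,v_2)v_2\bigl(1-\tfrac{f(x,y,v)}{f(x,y_2,v_2)}\bigr)\bigl(\tfrac{f(x,y_2,v_2)}{f(x,y,v)}-\tfrac{v}{v_2}\bigr)$, which is $\le 0$ by the same sign inequality used in the proof of Theorem \ref{theo4}; and several nonnegative $H$-terms carrying a minus sign. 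Hence $\dot V_2\le 0$ on $\Gamma$.

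Finally, $\dot V_2=0$ forces $x=x_2$ and every $H$-argument equal to $1$; propagating these equalities through the equilibrium relations and the delayed-state ratios pins down $y=y_2$, $v=v_2$ and $z=z_2$, so the largest invariant subset of $\{\dot V_2=0\}$ is $\{E_2\}$, and LaSalle's invariance principle gives global asymptotic stability of $E_2$. The main obstacle is the bookkeeping of the differentiation step: one has to split each of $\bigl(1-\tfrac{\varphi_1(y_2)}{\varphi_1(y)}\bigr)\dot y$, $\bigl(1-\tfrac{v_2}{v}\bigr)\dot v$, $\bigl(1-\tfrac{\varphi_2(z_2)}{\varphi_2(z)}\bigr)\dot z$ and the $f_1$-memory derivative so that every stray factor lands either in one of the two $\le 0$ brackets or in an $H$-term of the correct sign, and to verify the exact cancellation of the CTL cross-terms; assumption $(H_4)$ (the separable form $w=\varphi_1\varphi_2$) together with $f_3=\delta$ is precisely what makes that cancellation go through.
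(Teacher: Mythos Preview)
Your overall architecture is exactly the paper's: a Lyapunov functional of Volterra type centred at $E_2$, two $H$-memory integrals over $f_1$ and $f_2$, and LaSalle. The gap is in the coefficients you put on the $v$-block. You use
\[
\frac{a}{kG_1G_2}\int_{v_2}^{v}\Bigl(1-\frac{v_2}{\sigma}\Bigr)d\sigma
\quad\text{and}\quad
\frac{a\varphi_1(y_2)}{G_1G_2}\,f_2(\tau)e^{-\alpha_2\tau}
\]
for the $v$-integral and the $f_2$-memory weight, carried over verbatim from Theorem~\ref{theo4}. At $E_2$, however, the equilibrium identity is $G_1 f(x_2,y_2,v_2)v_2=(a+p\varphi_2(z_2))\varphi_1(y_2)$, not $a\varphi_1(y_2)$, and the paper replaces $a$ by $a+p\varphi_2(z_2)$ in both places. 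This is not cosmetic: with your coefficients the cancellation you describe does not occur. Combining the $-a\varphi_1(y)$, $-p\varphi_1(y)\varphi_2(z)$ parts of $(1-\varphi_1(y_2)/\varphi_1(y))\dot y$ with $(1-\varphi_2(z_2)/\varphi_2(z))\dot z$ indeed yields
\[
\frac{a+p\varphi_2(z_2)}{G_1}\bigl(\varphi_1(y_2)-\varphi_1(y)\bigr)
= f(x_2,y_2,v_2)v_2\Bigl(1-\frac{\varphi_1(y)}{\varphi_1(y_2)}\Bigr),
\]
so the prefactor on the ``$-\varphi_1(y)/\varphi_1(y_2)$'' piece is $f(x_2,y_2,v_2)v_2$. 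But the compensating ``$+\varphi_1(y)/\varphi_1(y_2)$'' produced by your $f_2$-memory derivative carries only the prefactor $a\varphi_1(y_2)/G_1$, and your $v$-integral gives the $-v/v_2$ term with that same smaller prefactor. The mismatch leaves an uncancelled remainder $-\tfrac{p\varphi_2(z_2)}{G_1}\varphi_1(y)$ (together with unmatched $v/v_2$ pieces), the logarithms no longer telescope into a single chain of $H$-terms with a common factor, and $\dot V_2\le 0$ fails.

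The fix is exactly what the paper does: take the $v$-coefficient $\dfrac{a+p\varphi_2(z_2)}{kG_1G_2}$ and the $f_2$-memory weight $\dfrac{(a+p\varphi_2(z_2))\varphi_1(y_2)}{G_1G_2}$. Then every block carries the common factor $f(x_2,y_2,v_2)v_2=\dfrac{(a+p\varphi_2(z_2))\varphi_1(y_2)}{G_1}$, and your final paragraph (the two nonpositive brackets, the $-H$ sums, and the LaSalle step) goes through as written.
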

\begin{proof}
Define a Lyapunov functional for $E_{2}$.

\begin{align*}
V_{2}&=x-x_{2}-\int^{x}_{x_{2}}\frac{f(x_{2},y_{2},v_{2})}{f(s,y_{2},v_{2})}ds+\frac{1}{G_{1}}\int^{y}_{y_{2}}\left(1-\frac{\varphi_{1}(y_{2})}{\varphi_{1}(\sigma)}\right)d\sigma+\frac{a+p\varphi_{2}(z_{2})}{kG_{1}G_{2}}\int^{v}_{v_{2}}\left(1-\frac{v_{2}}{\sigma}\right)d\sigma \\
&+ \frac{p}{cG_{1}}\int^{z}_{z_{2}}\left(1-\frac{\varphi_{2}(z_{2})}{\varphi_{2}(\sigma)}\right)d\sigma\\
&+\frac{1}{G_{1}}f(x_{2},y_{2},v_{2})v_{2}\int^{\infty}_{0}f_{1}(\tau)e^{-\alpha_{1}\tau}\int^{t}_{t-\tau}H\left(\frac{f(x(s),y(s),v(s))v(s)}{f(x_{2},y_{2},v_{2})v_{2}}\right)dsd\tau\\
&+\frac{a+p\varphi_{2}(z_{2})}{kG_{1}G_{2}}\varphi_{1}(y_{2})\int^{\infty}_{0}f_{2}(\tau)e^{-\alpha_{2}\tau}\int^{t}_{t-\tau}H\left(\frac{\varphi_{1}(y(s))}{\varphi_{1}(y_{2})}\right) dsd\tau.
\end{align*}

The derivative of $V_2$ along with the solutions of system (\ref{viral1}) is

\begin{align*}
\dot{V_{2}}&=\left(1-\frac{f(x_{2},y_{2},v_{2})}{f(x,y_{2},v_{2})}\right)\dot{x}+\frac{1}{G_{1}}\left(1-\frac{\varphi_{1}(y_{2})}{\varphi_{1}(y)}\right)\dot{y}+\frac{a+p\varphi_{2}(z_{2})}{kG_{1}G_{2}}\left(1-\frac{v_{2}}{v}\right)\dot{v}\\
&+ \frac{p}{cG_{1}}\int^{z}_{z_{2}}\left(1-\frac{\varphi_{2}(z_{2})}{\varphi_{2}(z)}\right)\dot{z}+\frac{1}{G_{1}}f(x_{2},y_{2},v_{2})v_{2}\int^{\infty}_{0}f_{1}(\tau)e^{-\alpha_{1}\tau}\\
&\left[ H\left(\frac{f(x,y,v)v}{f(x_{2},y_{2},v_{2})v_{2}}\right)-H\left(\frac{f(x(t-\tau),y(t-\tau),v(t-\tau))v(t-\tau)}{f(x_{2},y_{2},v_{2})v_{2}}\right)\right] d\tau\\
&+\frac{a+p\varphi_{2}(z_{2})}{kG_{1}G_{2}}\varphi_{1}(y_{2})\int^{\infty}_{0}f_{2}(\tau)e^{-\alpha_{2}\tau}\left[ H\left(\frac{\varphi_{1}(y)}{\varphi_{1}(y_{2})}\right)-H\left(\frac{\varphi_{1}(y(t-\tau))}{\varphi_{1}(y_{2})}\right)\right]  d\tau.
\end{align*}
Applying  $n(x_{2})=f(x_{2},y_{2},v_{2})v_{2}$, $f(x_{2},y_{2},v_{2})v_{2}=\frac{1}{G_{1}}(a\varphi_{1}(y_{1})+p\varphi_{1}(y_{1})\varphi_{2}(z_{2}))$, $\varphi_{1}(y_{1})=\frac{b}{c}$, $\frac{u}{kG_{2}}=\frac{\varphi_{1}(y_{2})}{v_{2}}$, we obtain
\begin{align*}
\dot{V_{2}}&=\left(n(x)-n(x_{1})\right)\left(1-\frac{f(x_{2},y_{2},v_{2})}{f(x,y_{2},v_{2})}\right)\\
&+\frac{a\varphi_{1}(y_{2})+p\varphi_{1}(y_{2})\varphi_{2}(z_{2})}{G_{1}}\left(1-\frac{f(x_{2},y_{2},v_{2})}{f(x,y_{2},v_{2})}+\frac{f(x,y,v)v}{f(x,y_{2},v_{2})v_{2}}\right)\\
&+\frac{a\varphi_{1}(y_{2})+p\varphi_{1}(y_{2})\varphi_{2}(z_{2})}{G_{1}}\left(1-\frac{1}{G_{1}}\int^{\infty}_{0}f_{1}(\tau)e^{-\alpha_{1}\tau}\frac{\varphi_{1}(y_{2})}{\varphi_{1}(y)}\frac{v(t-\tau)}{v_{2}}\frac{f(x(t-\tau,y(t-\tau),v(t-\tau)))}{f(x_{1},y_{2},v_{2})}\right)d\tau\\
&+\frac{a\varphi_{1}(y_{2})+p\varphi_{1}(y_{2})\varphi_{2}(z_{2})}{G_{1}}\left[\frac{1}{G_{1}}\int^{\infty}_{0}f_{1}(\tau)e^{-\alpha_{1}\tau}\textnormal{ln}\left(\frac{f(x(t-\tau),y(t-\tau),v(t-\tau))v(t-\tau)}{f(x_{2},y_{2},v_{2})v_{2}}\right)d\tau \right]\\
&+\frac{a\varphi_{1}(y_{2})+p\varphi_{1}(y_{2})\varphi_{2}(z_{2})}{G_{1}}\left[\frac{1}{G_{2}}\int^{\infty}_{0}f_{2}(\tau)e^{-\alpha_{2}\tau}\textnormal{ln}\left(\frac{\varphi_{1}(y(t-\tau))}{\varphi_{1}(y_{2})}d\tau\right)-\textnormal{ln}\left(\frac{f(x,y,v)v\varphi_{1}(y)}{f(x_{2},y_{2},v_{2})v_{2}\varphi_{1}(y_{2})}\right)\right].
\end{align*}

Therefore

\begin{align*}
\dot{V_{2}}&=\left(n(x)-n(x_{1})\right)\left(1-\frac{f(x_{2},y_{2},v_{2})}{f(x,y_{2},v_{2})}\right)\\
&+\frac{a\varphi_{1}(y_{2})+p\varphi_{1}(y_{2})\varphi_{2}(z_{2})}{G_{1}}\left(1-\frac{v}{v_{2}}+\frac{f(x,y_{2},v_{2})}{f(x,y,v)}+\frac{f(x,y,v)v}{f(x,y_{2},v_{2})v_{2}}\right)\\
&-\frac{a\varphi_{1}(y_{2})+p\varphi_{1}(y_{2})\varphi_{2}(z_{2})}{G_{1}}\left[H\left(\frac{f(x_{2},y_{2},v_{2})}{f(x,y_{2},v_{2})}\right)+H\left(\frac{f(x,y_{2},v_{2})}{f(x,y,v)}\right)\right]\\
&-\frac{a\varphi_{1}(y_{2})+p\varphi_{1}(y_{2})\varphi_{2}(z_{2})}{G_{1}}\left[\frac{1}{G_{1}}\int^{\infty}_{0}f_{1}(\tau)e^{-\alpha_{1}\tau}H\left(\frac{\varphi_{1}(y_{2})}{\varphi_{1}(y)}\frac{v(t-\tau)}{v_{2}}\frac{f(x(t-\tau),y(t-\tau),z(t-\tau))}{f(x_{2},y_{2},v_{2})}\right)d\tau \right]\\
&-\frac{a\varphi_{1}(y_{2})+p\varphi_{1}(y_{2})\varphi_{2}(z_{2})}{G_{1}}\left[\frac{1}{G_{2}}\int^{\infty}_{0}f_{2}(\tau)e^{-\alpha_{2}\tau}H\left(\frac{\varphi_{1}(y(t-\tau))v_{2}}{\varphi_{1}(y_{2})v}\right)d\tau\right].
\end{align*}

Therefore $\dot{V_{2}}\leq 0$, thus $E_{2}$ is stable. $\dot{V_{2}}=0$ if and only if
$x=x_{2}, y=y_{2}, v=v_{2}, z=z_{3}$. So, the largest compact invariant set in $\{(x,y,v,z):\dot{V_{2}}=0\}$ is
the singleton $E_{2}$. From LaSalle invariance principle, we conclude that $E_{2}$ is globally
asymptotically stable.
\end{proof}

\section{Numerical simulations }
In this section we present some numerical simulations to illustrate the results of stability obtained in our theorems of previous sections.
\begin{example}
Consider the functions  $n(x)= \lambda-dx+ rx \left( 1- \frac{x}{K} \right)$, $\phi_1(y)=y, \phi_2(z)=z, w(y,z)=yz$ and $f(x,y,v)= \frac{\beta x}{\alpha y+ \gamma x}$. Let $\tau_1, \tau_2 \in [0, \infty)$ two fixed delays, and set  $f_1(\tau)= \delta(\tau- \tau_1)$, $f_2(\tau)= \delta(\tau- \tau_2)$, $f_3(\tau)= \delta(\tau)$, where $\delta$ is the Dirac delta function defined as 

$$ \int_0^{ \infty} \delta(\tau- \tau_i) F(\tau)d \tau= F(\tau_i). $$

Then, the model takes the form:

\begin{align*}
\dot{x} &= \lambda-dx+ rx \left( 1- \frac{x}{K} \right)- \frac{\beta x v}{\alpha y+ \gamma x}, \\
\dot{y} &= \frac{\beta x(t- \tau_1) v(t- \tau_1)}{\alpha y(t- \tau_1)+ \gamma x(t- \tau_1)} e^{- \alpha_1 \tau_1}- a y - pyz, \\
\dot{v} &= k e^{- \alpha_2 \tau_2} y(t- \tau_2)-uv, \\
\dot{z}& = cyz-bz,
\end{align*}

 Fix the parameters as $\lambda=200, d=0.1, r=0.6, K=500,  p=1, k=0.8, \alpha_2=0.05, u=3.5, c=0.03, b=0.75, \tau_1=5, \tau_2=10, \alpha_1=0.1, \alpha= \gamma=0.001$. The trivial equilibrium point is given by $E_0=(666.6666,0,0,0)$, so the basic reproduction number $\mathcal{R}_0$ is obtained by:

 $$R_0= \frac{kG_1 G_2f(x_0,0,0)}{au}= \frac{k e^{-\mu \tau_1} e^{-\alpha_2 \tau_2} \beta }{au \gamma}= 105.10841176326923474 \beta.$$

 $\mathcal{R}_0 \leq 1$ iff $\beta \leq 0.009513986$. We set $\beta=0.003$ . By theorem \eqref{theo2} i), we have the single equilibrium $E_0=(666.6666,0,0,0)$ which is globally asymptotically stable by theorem \eqref{theo3}.

 Using a constant history function $S=(25,50,10,5)$ for $t \in (0,10)$ and DDE 23 tool from Matlab,  we obtain the figure \eqref{fig1}. \par  
\end{example}

\begin{example}
  Now, set $\beta=0.0096$, so $R_0>1$. Computing $R_1$ from its definition we have $R_1= \frac{ke^{-\alpha_1 \tau_1} e^{-\alpha_2 \tau_2}}{au} f(\hat{x}, \hat{y}, \hat{v})$, where

  $$ \hat{y}= \frac{b}{c}, \quad \hat{v} = \frac{\hat{y}k e^{-\alpha_2 \tau_2}}{u}, \quad n(\hat{x})- f(\hat{x}, \hat{y}, \hat{v})\hat{v}=0, $$

therefore $R_1=0.97091<1$ and we have then, a second equilibrium 

$$E_1=(659.461141,5.962025,0.826548,0).$$

 Using the history function from previous example we can plot the solution with Matlab. By theorem \eqref{theo4}, $E_1$ is globally asymptotically stable as we can see in figure \eqref{fig2}. \par 

   Finally, for $\beta=1$, we have $R_0>1$ and $R_1= 6.088529$, so there exists an infection free equilibrium $E_0$ and two equilibria 

   $$E_1=(1.461792, 152.184859,21.098236,0),$$  and $$E_2= (1.537198,25,3.465889,4.070823).$$
  
    By theorem \eqref{theo5} we have global stability of equilibrium $E_2$. The solutions are showed in figure \eqref{fig3}. 
\end{example}

  \begin{example}
In order to show that our model, generalizes the previous articles, we propose the following incidence function to show our results:

$$f(x,y,v)= \frac{\beta x }{(1+ \alpha y)(1+ \gamma v)},$$

this function satisfies
\begin{itemize}
 \item[i)] $f(0,y,v)=0, \quad \forall y,v \geq 0$.
 \item [ii)] $ \frac{\partial f}{ \partial x }=  \frac{\beta }{(1+ \alpha y)(1+ \gamma v)} >0, \quad \forall x,y,v >0  $.
 \item [iii a)] $ \frac{\partial f}{ \partial y }= -{\frac {\beta\,x \alpha }{ \left( \alpha y+1 \right) ^{2} \left( \gamma v+1 \right) }} \leq 0  \quad\forall x,y,v \geq 0 $.
 \item [iii b)] $ \frac{\partial f}{\partial v} = -{\frac {\beta\,x \gamma}{ \left( \alpha y+1 \right)  \left( \gamma v+1 \right) ^{2}}}\leq 0, \quad \forall x,y,v \geq 0. $ 
 \end{itemize}
 
Set the other functions and parameters as in example 1, then we obtain model:

\begin{align*}
\dot{x} &= \lambda-dx+ rx \left( 1- \frac{x}{K} \right)- \frac{\beta xv }{(1+  y)(1+ v)}, \\
\dot{y} &= \frac{\beta x(t- \tau_1)v(t- \tau_1) }{(1+  y(t- \tau_1))(1+v(t- \tau_1))} e^{- \alpha_1 \tau_1}- a y - pyz, \\
\dot{v} &= k e^{- \alpha_2 \tau_2} y(t- \tau_2)-uv, \\
\dot{z}& = cyz-bz,
\end{align*}

The infection free equilibrium is $E_0=(666.6666,0,0,0)$ as in previous cases, with $R_0=70.0722745 \beta$, so $R_0>1$ iff $\beta>0.01427097$. Therefore, if we set $\beta= 0.1$ then we have $R_0>1$, $R_1= 4.9234560677357286281$ and there exists two more equilibria points, 

$$E_1= (115.436331,183.268932,25.407594,0), \quad E_2= (481.791432,25,3.465889,3.138764)$$

 with $E_2$  globally asymptotically stable. The solutions are showed in figure \eqref{fig4}.
\end{example}

\section{Conclusions}
In this paper we studied the global properties of a model of infinitely distributed delayed viral infection, that considers a nonlinear CTL immune response, given by $w(y,z)=\phi_1(y) \phi_2(z)$ and a general incidence function of the form $f(x,y,v)v$, where $w$ and $f$ satisfy certain conditions derived from previous works and biological meanings. Even when there exists variety of papers that include the CTL immune response (see for example \cite{nowak1996population, li2015global, yang2015stability, yang2015analyzing, hattaf2012global}) and general incidence functions of various types (see \cite{yang2015analyzing, hattaf2012global, shu2013global} ), the model proposed in this article includes a family of the   works studied by several authors, and their conclusions can be seen as a particular case of our theorems. There lies its importance and relevance. \par 

The model presents always  an infection free positive equilibrium $E_0=(\bar{x},0,0,0)$, and two types of chronic infection equilibria: the CTL inactivated infection equilibrium (CTL-IE) $E_1=(x_1,y_1,v_1,0)$ and the CTL activated infection equilibrium (CTL-AE) $E_2=(x_2,y_2,v_2,z_2)$. The coexistence of these equilibria is determined by the basic reproduction number $R_0$ and the viral reproduction number $R_1$, those were defined in section 2 and are given in terms of parameters and the functions $f(x,y,v), f_i(\tau), \phi_1(y)$ and $\phi_2(z)$. The results show that $R_0>R_1$ and  the system  admits always a positive infection free equilibrium $E_0$, which is the unique equilibrium when $R_0\leq 1$. If $R_0>1$ then, in addition to $E_0$ we have only the CTL-IE (when $R_1\leq 1$), or the coexistence of the CTL-IE and CTL-AE ($R_1>1$). \par 

We proved, by construction of a Lyapunov function, that whenever the equilibrium $E_0$ is unique ($R_0 \leq 1$) and $R_0 \neq 1$, $E_0$ is globally asymptotically stable, moreover when $R_0>1$ and $R_1<1$ the CTL-IE, $E_1$ is  globally asymptotically stable. In the case of CTL-AE, $E_2$ we obtained conditions for global stability only in the case $f_3(\tau)= \delta (\tau)$, \textit{ie}, when the equation of $\dot{z}$ does not present delay. The results indicate that in this case, the equilibrium $E_2$ is globally asymptotically stable when $R_1>1$ and conditions $i)-H_4$ hold. It will be of interest to find conditions
that guarantee the global stability of the $E_2$  with a general $f_3(\tau)$, this topic can be taken as a future work.

\section*{Acknowledgments}
This article was supported in part by Mexican SNI under grant 15284 and CONACYT Scholarship 295308.

\section{Figures}

\begin{figure}[H]
\includegraphics[scale=0.5]{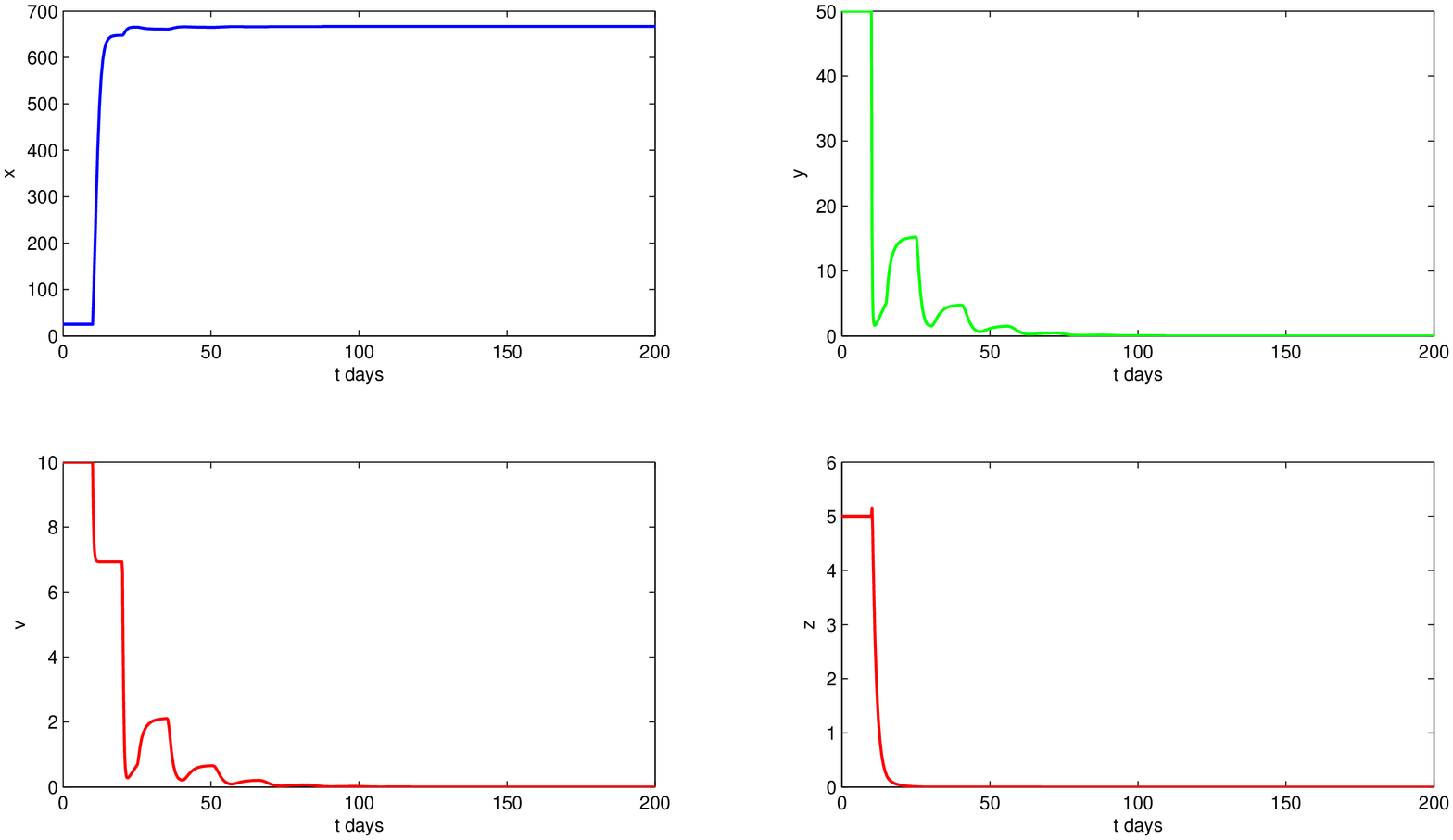} 
\caption{Global stability of infection free equilibrium for $\mathcal{R}_0<1$.} \label{fig1}
\end{figure}

\begin{figure}[H]
\begin{center}
\includegraphics[scale=0.5]{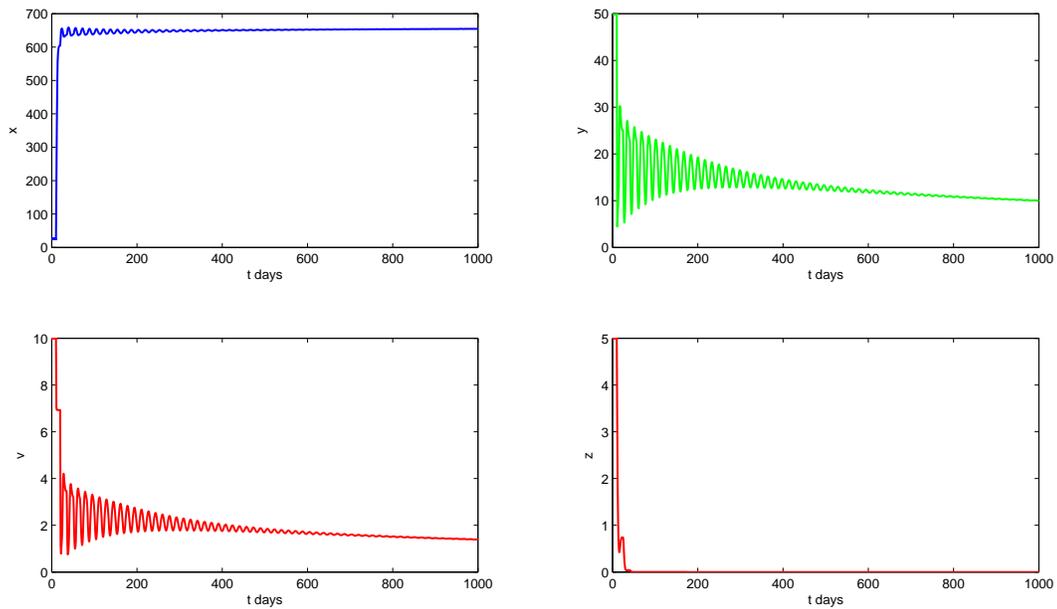} 
\end{center}
\caption{Global stability of CTL-IE $E_1$ for $\mathcal{R}_0>1>R_1$.} \label{fig2}
\end{figure}

\begin{figure}[H]
 \includegraphics[scale=0.5]{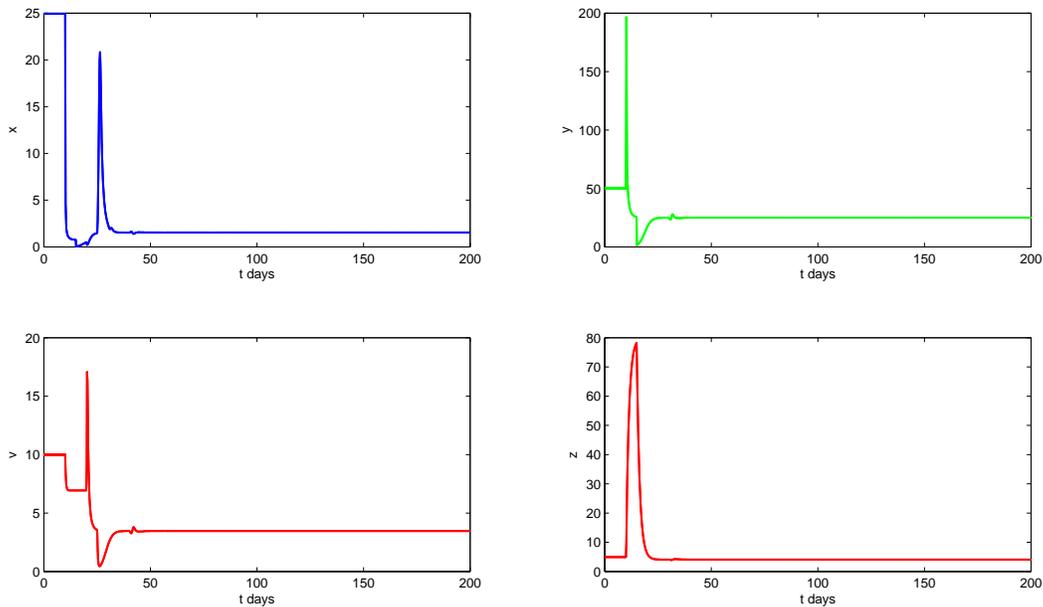}
 \caption{Global stability of CTL-AE $E_2$ for $R_0>1, R_1>1$.} \label{fig3}
\end{figure} 

\begin{figure}[H]
 \includegraphics[scale=0.5]{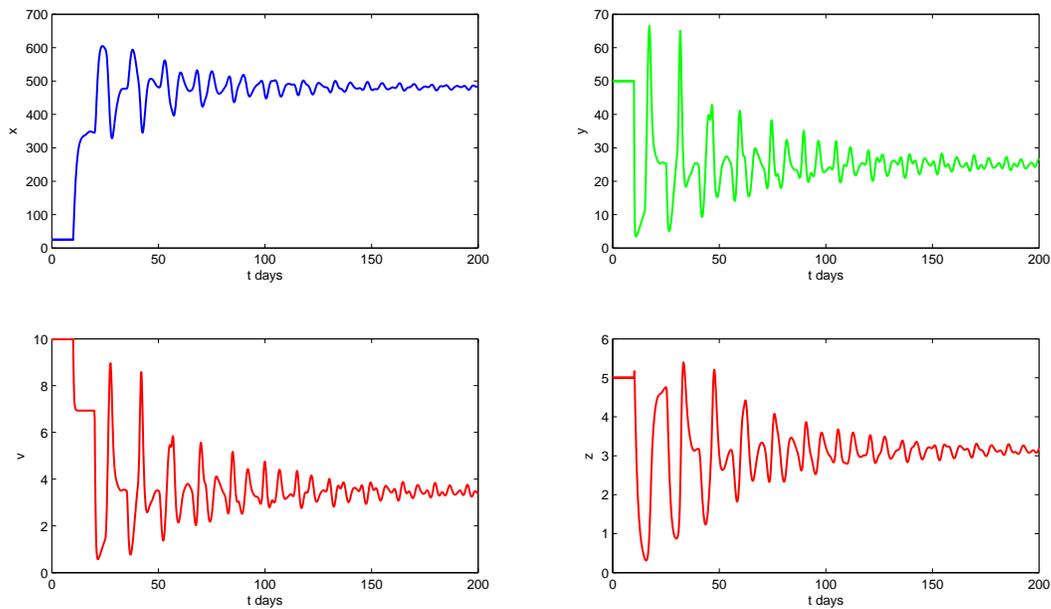}
 \caption{Global stability of CTL-AE $E_2$ for $f(x,y,v)= \frac{\beta x}{(1+\alpha y)(1+ \gamma v)}$, with $R_0>1, R_1>1$.} \label{fig4}
 \end{figure}

\section*{References}

\end{document}